\newcolumntype{C}[1]{>{\centering\arraybackslash}p{#1}}
\definecolor{navy}{HTML}{2F729C}
\definecolor{red1}{HTML}{FF0000} 
\algrenewcommand\algorithmicrequire{\textbf{Input:}}
\algrenewcommand\algorithmicensure{\textbf{Output:}}
\newcommand{\Q}{{\mathbb Q}}
\newtheorem{theorem}{Theorem}[section]
\newtheorem{lemma}[theorem]{Lemma}
\newtheorem{proposition}[theorem]{Proposition}
\newtheorem{corollary}[theorem]{Corollary}
\theoremstyle{definition}
\newtheorem{example}[theorem]{Example}
\newtheorem{mainthm}{Theorem}
\newtheorem{maincor}[mainthm]{Corollary}
\numberwithin{equation}{section}
\title{Reduced minimal models and torsion}
\author{Alexander J. Barrios}
\address{Department of Mathematics, University of St. Thomas, St. Paul, MN 55105 USA}
\email{abarrios@stthomas.edu}
\subjclass{Primary 11G05, 14H52}
\keywords{elliptic curves, reduced minimal model, parameterized families of elliptic curves}
\begin{document}
\begin{abstract}
Let $E/\mathbb{Q}$ be an elliptic curve. The reduced minimal model of $E$ is a global minimal model $y^{2}+a_{1}xy+a_{3}y=x^{3}+a_{2}x^{2}+a_{4}x+a_{6}$ which satisfies the additional conditions that~$a_{1},a_{3}\in~\{0,1\}$ and $a_{2}\in\{0,\pm1\}$. The reduced minimal model of $E$ is unique, and in this article, we explicitly classify the reduced minimal model of an elliptic curve $E/\mathbb{Q}$ with a non-trivial torsion point. We obtain this classification by first showing that the reduced minimal model of $E$ is uniquely determined by a congruence on $c_6$ modulo $24$. We then apply this result to parameterized families of elliptic curves to deduce our main result. We also show that the reduction at $2$ and $3$ of~$E$ affects the reduced minimal model of $E$.
\end{abstract}

\maketitle

\section{Introduction}
Let $E/\mathbb{Q}$ be an elliptic curve with minimal discriminant $\Delta$. Then $E$ is $\mathbb{Q}$-isomorphic to an elliptic curve given by a \textit{global minimal model} $y^{2}+a_{1}xy+a_{3}y=x^{3}+a_{2}x^{2}+a_{4}x+a_{6}$ with the property that each $a_{i}\in\mathbb{Z}$ and its discriminant is $\Delta$. The \textit{reduced minimal model} of $E$ is a
global minimal model with the property that $a_{1},a_{3}\in\left\{
0,1\right\}  $ and $a_{2}\in\left\{  0,\pm1\right\}  $. The reduced minimal
model of $E$ is unique \cite{MR1628193}. Consequently, the set of $\mathbb{Q}$-isomorphism classes of elliptic curves $E/\mathbb{Q}$ is in one-to-one correspondence with the set of elliptic curves given by their reduced minimal model. For this reason, databases of elliptic curves, such as that of LMFDB \cite{lmfdb} and Stein-Watkins \cite{MR2041090}, usually list elliptic curves $E/\mathbb{Q}$ by their reduced minimal model.

Let $y^{2}+a_{1}xy+a_{3}y=x^{3}+a_{2}x^{2}+a_{4}x+a_{6}$ denote the reduced
minimal model of $E/\mathbb{Q}$. Then there are twelve combinations for the Weierstrass coefficients
$a_{1},a_{2},$ and $a_{3}$, and we set $\operatorname{rmm}\!\left(  E\right)
=\left(  a_{1},a_{2},a_{3}\right)  $. For $1\leq i\leq12$, define
$R_{i}=\left(  a_{1},a_{2},a_{3}\right)  $ where%
\begin{equation}
{\renewcommand{\arraystretch}{1.05}\renewcommand{\arraycolsep}{.35cm}
\begin{array}
[c]{c|c|c|c|c|c|c|c|c|c|c|c|c}%
i & 1 & 2 & 3 & 4 & 5 & 6 & 7 & 8 & 9 & 10 & 11 & 12\\
a_{1} & 0 & 0 & 0 & 0 & 0 & 0 & 1 & 1 & 1 & 1 & 1 & 1\\
a_{2} & 0 & 0 & -1 & -1 & 1 & 1 & 0 & 0 & -1 & -1 & 1 & 1\\
a_{3} & 0 & 1 & 0 & 1 & 0 & 1 & 0 & 1 & 0 & 1 & 0 & 1
\end{array}}\label{ta:Ris}
\end{equation}
In this article, we show that the torsion structure of an elliptic curve $E/\mathbb{Q}$ determines the possible $\operatorname{rmm}\!\left(  E\right)  $ which can
occur. To this end, let $C_m$ denote the cyclic group of order $m$. We prove:
\begin{mainthm}
\label{mainthm}\textit{Let $T$ be one of the fifteen torsion subgroups allowed by
Mazur's Torsion Theorem~\cite{MR488287}. If $E/\mathbb{Q}$ is an elliptic curve with $T\hookrightarrow E\!\left(\mathbb{Q}\right)  _{\text{tors}}$, then $\operatorname{rmm}\!\left(  E\right)$ is one of the following $R_{i}$
for $i$ as given in the table below:}
\[
{\renewcommand{\arraystretch}{1.4}\renewcommand{\arraycolsep}{.5cm}
\begin{array}{cccccc}
T  & C_{1}& C_{2},C_{4},C_{2}\times C_{2} & C_{3} & C_{5} & C_{6}\\\hline
i & 1-12 & 1,3,5,7-12 & 1,2,5-10 & 4,6,7,12 & 1,5,7-10\\\hline\hline
T & C_{7},C_{9} & C_{8},C_{2}\times C_{4} & C_{10},C_{2}\times C_{8} &
C_{12},C_{2}\times C_{6}\\\hline
i & 7,10 & 3,5,7,12 & 7 & 7-10
\end{array}}
\]

\end{mainthm}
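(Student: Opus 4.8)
The plan is to combine the earlier reduction — that $\operatorname{rmm}(E)$ is determined by the residue of $c_6$ modulo $24$ — with the classical parameterizations of elliptic curves carrying prescribed torsion. Every group $T$ in Mazur's list \cite{MR488287} corresponds to a genus-zero modular curve ($X_1(N)$ for $N\le 10$ or $N=12$, and the analogous curve for the groups $C_2\times C_{2k}$), so the curves $E/\Q$ with $T\hookrightarrow E(\Q)_{\mathrm{tors}}$ are cut out by a family of Weierstrass models depending on one or two rational parameters; the fine moduli cases $N\ge 4$ give a single parameter, while the coarse cases $C_1,C_2,C_3,C_2\times C_2$ require two. I would present each family in (a variant of) the Tate normal form together with the relations cutting out $T$, so that $c_4,c_6,\Delta$ become explicit polynomials in the parameter(s). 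For $T=C_1$ there is no constraint at all, which accounts for all twelve $R_i$ appearing; the theorem is the statement that this list shrinks in a prescribed way as $T$ grows.

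The central mechanism is that $\operatorname{rmm}(E)$ depends only on $c_6 \bmod 24$, where $c_6$ is the invariant of the (any) global minimal model. By the Chinese Remainder Theorem it therefore suffices to track $c_6$ modulo $8$ and modulo $3$ as the parameters range over $\Z$. A useful simplification is that every unit of $\Z/24\Z$ squares to $1$, so $u^6\equiv 1 \pmod{24}$ whenever $\gcd(u,6)=1$; since passing from the parameterized model to the minimal one replaces $c_6$ by $u^{-6}c_6$, any rescaling supported away from $2$ and $3$ leaves $c_6\bmod 24$ unchanged. Consequently the whole computation localizes at the primes $2$ and $3$: for each family I would determine the exact powers of $2$ and $3$ needed to reach a minimal model, substitute, and then read off the finite set of residues of $c_6\bmod 24$ realized by the family.

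Feeding these residues into the $c_6\bmod 24 \leftrightarrow \operatorname{rmm}$ dictionary yields, for each $T$, the list of admissible $R_i$ recorded in the table. Each row then splits into two claims: exhaustiveness, that no residue outside the computed set arises — which follows from the polynomial description of $c_6$ together with a finite check of the parameters modulo $24$ — and realizability, that every listed $R_i$ is attained, which I would certify by exhibiting an explicit parameter value (equivalently, an explicit curve) in each required class. For the two-parameter families $C_1,C_2,C_3,C_2\times C_2$ this bookkeeping is larger, since one must compute the joint distribution of $(c_6\bmod 8,\, c_6\bmod 3)$ over a pair of variables, but it remains a finite CRT case analysis.

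The main obstacle is the minimality analysis at $2$ and $3$. The Tate normal form is rarely globally minimal, and minimality at these two primes is controlled by delicate congruences (Kraus's conditions, equivalently Tate's algorithm) that depend on the reduction type there; determining the correct scaling factor $u$ at $2$ and $3$ uniformly across each family — in particular handling the special parameter values at which $c_4,c_6,\Delta$ acquire extra divisibility by $2$ or $3$ — is where the real work lies. This is exactly the phenomenon anticipated in the abstract, namely that the reduction of $E$ at $2$ and $3$ governs $\operatorname{rmm}(E)$. Once the $2$- and $3$-adic minimality is settled for each family, the remaining steps are the routine (if lengthy) residue computations described above.
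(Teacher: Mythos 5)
Your overall architecture---reduce $\operatorname{rmm}(E)$ to a congruence on the $c_6$ of a global minimal model, push the torsion parameterizations through that dictionary, localize the minimality analysis at $2$ and $3$, and finish with a finite residue check on the parameters---is the paper's route: Proposition~\ref{mod24} combined with Theorem~\ref{thm1} applied to the families $E_T$ of Table~\ref{ta:ETmodel}, with the $2$- and $3$-adic scaling factors $u_T$ supplied by \cite{2020arXiv200101016B}.

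There is, however, one concrete error in your central reduction, and it would propagate: $\operatorname{rmm}(E)$ is \emph{not} determined by $c_6 \bmod 24$. The correct statement (Proposition~\ref{mod24}) is that it is determined by $c_6 \bmod 24$ when $c_6$ is odd and by $c_6/2 \bmod 24$---equivalently $c_6 \bmod 48$---when $c_6$ is even. Indeed, from Corollary~\ref{corcong} the classes $R_1$ and $R_2$ both force $c_6\equiv 0\pmod{24}$ (they correspond to $c_6\equiv 0$ and $648\pmod{864}$), while $R_3,R_4$ share the residue $16$ and $R_5,R_6$ share the residue $8$: the coefficient $a_3$ is an extra $2$-adic bit invisible to $c_6\bmod 24$ when $a_1=0$. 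So your proposed bookkeeping ``track $c_6$ modulo $8$ and modulo $3$'' cannot prove the theorem wherever exactly one member of a pair $\{R_{2j-1},R_{2j}\}$, $j=1,2,3$, must be excluded---e.g.\ for $T=C_5$ you must rule out $R_3$ and $R_5$ while admitting $R_4$ and $R_6$, and for $T=C_2$ you must rule out $R_2,R_4,R_6$ while admitting $R_1,R_3,R_5$. The fix is local: work modulo $16$ and $3$ in the even case. Your localization at $2$ and $3$ survives this, since even though $u^6\not\equiv 1\pmod{48}$ in general, one still has $u^6c_6\equiv c_6\pmod{48}$ for $c_6$ even and $\gcd(u,6)=1$ because $24\mid u^6-1$; but on the parameter side you must then run the residues modulo $48$ rather than $24$ wherever a division by $2$ enters in passing to the minimal model---the paper flags exactly this subtlety at the end of the proof of Theorem~\ref{thm1}. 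A minor further remark: for the containment asserted in the theorem only the exhaustiveness half of your plan is needed; realizability is what sharpens it to the full classification of Theorem~\ref{thm1}.
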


Now suppose that $E$ has a non-trivial torsion point. Then by Theorem~\ref{mainthm}, if $\operatorname{rmm}\!\left(  E\right)  =R_{2}$ (resp.~$R_{4}$), then $E\!\left(\mathbb{Q}\right)  _{\text{tors}}\cong C_{3}$ (resp. $C_{5}$). Since for each $R_{i}$, there exists an elliptic curve $E$ with trivial torsion subgroup such that $\operatorname{rmm}\!\left(  E\right)  =R_{i}$, the proof of Theorem~\ref{mainthm} is reduced to considering elliptic curves with a non-trivial torsion point. Parameterizations for such elliptic curves are obtained from the modular curves $X_{1}\!\left(  n\right)  $ and $X_{1}\!\left(  2,n\right)  $ \cite{MR0434947}. In this article, we consider families of elliptic curves~$E_{T}$ (see Table \ref{ta:ETmodel}) which have the property that they parameterize all rational elliptic curves with a non-trivial torsion subgroup (see Proposition \ref{rationalmodels}). Theorem \ref{mainthm} is a consequence of Theorem \ref{thm1}, which explicitly classifies $\operatorname{rmm}\!\left(E_{T}\right)  $ in terms of the parameters of $E_{T}$ (see Table~\ref{ta:rrm}). 

Given an elliptic curve $E$, a global minimal model for $E$ can be computed via Tate's algorithm~\cite{Tate1975}. Tate's algorithm also provides local information about the curve. For this reason, the algorithm needs to be run for each prime dividing the discriminant in order to obtain a global minimal model. In $1982$, Laska \cite{MR637305} gave a simpler algorithm for determining a global minimal model of an elliptic curve. In fact, the algorithm outputs the reduced minimal model of an elliptic curve. In $1989$, Kraus \cite{MR1024419} gave necessary and sufficient conditions for determining when there is an elliptic curve with Weierstrass coefficients in $\mathbb{Z}$ such that its \textit{signature} $\left(  c_{4},c_{6},\Delta\right)  $ is $\left(  \alpha,\beta,\gamma\right)  $, where $\alpha,\beta,\gamma\in\mathbb{Z}$ with $\alpha^{3}-\beta^{2}=1728\gamma\neq0$. Connell \cite{connell} then modified Laska's algorithm to make use of Kraus's theorem. The resulting algorithm is known today as the Laska-Kraus-Connell algorithm (see Algorithm~\ref{LKCA}). In Section~\ref{LKC}, we give an overview of the Laska-Kraus-Connell algorithm and show that $\operatorname{rmm}\!\left(  E\right)$ uniquely determines congruences on the $c_4$ and $c_6$ associated to a global minimal model of $E$ (see Corollary~\ref{corcong}). As a consequence, we obtain:
\begin{mainthm}
\label{thmred}\textit{Let $E/\mathbb{Q}$ be an elliptic curve. If $E$ has}
\begin{itemize}
\item[$\left(  i\right)  $] \textit{good reduction at $2$ (resp. $3$), then
$\operatorname{rmm}(E)=R_{i}$ where $i=2,4,6-12$ (resp. $i=1-12$);}

\item[$\left(  ii\right)  $] \textit{multiplicative reduction at $2$ (resp. $3$), then
$\operatorname{rmm}(E)=R_{i}$ where $i=7-12$ (resp. $i=3-8,11,12$);}

\item[$\left(  iii\right)  $] \textit{additive reduction at $2$ (resp. $3$), then
$\operatorname{rmm}(E)=R_{i}$ where $i=1,3,5$ (resp. $i=1,2,9,10$).}
\end{itemize}
\end{mainthm}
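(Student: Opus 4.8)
The plan is to recover the Kodaira reduction type of $E$ at $p\in\{2,3\}$ directly from the invariants $(c_4,c_6,\Delta)$ of its reduced minimal model. As $\operatorname{rmm}(E)$ is a global minimal model, these are the minimal invariants of $E$, so the usual dichotomy applies: $E$ has good reduction at $p$ iff $\val_p(\Delta)=0$, multiplicative reduction iff $\val_p(\Delta)>0=\val_p(c_4)$, and additive reduction iff $\val_p(\Delta)>0$ and $\val_p(c_4)>0$. Hence it suffices to determine $\val_p(c_4)$ and $\val_p(\Delta)$ as functions of $\operatorname{rmm}(E)=R_i$. I would obtain $\val_p(c_4)$ from Corollary~\ref{corcong}, which fixes $c_4$ and $c_6$ modulo suitable powers of $2$ and $3$, and then read off $\val_p(\Delta)$ from the identity $1728\,\Delta=c_4^3-c_6^2$. (Each needed congruence can equally be checked by hand from $b_2=a_1^2+4a_2$, $b_4=2a_4+a_1a_3$, $b_6=a_3^2+4a_6$ and the definitions of $c_4,c_6$.)

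For $p=2$, the governing fact is that $c_4\equiv b_2^2\equiv a_1\pmod2$, so $2\mid c_4$ exactly when $a_1=0$, i.e.\ when $i\in\{1,\dots,6\}$. Multiplicative reduction forces $\val_2(c_4)=0$ and hence $a_1=1$, giving $i\in\{7,\dots,12\}$; additive reduction forces $\val_2(c_4)>0$ and hence $a_1=0$. To split the latter case I would compute one $2$-power finer: when $a_1=0$ one finds $16\mid c_4$ and $c_6\equiv 8a_3\pmod{32}$. If $a_3=1$ then $\val_2(c_6)=3$, so $\val_2(c_6^2)=6<12\le\val_2(c_4^3)$ and therefore $\val_2(\Delta)=0$, i.e.\ good reduction ($i\in\{2,4,6\}$); if $a_3=0$ then $\val_2(c_6^2)\ge10$ and $\val_2(\Delta)>0$, forcing additive reduction ($i\in\{1,3,5\}$). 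Collecting cases yields additive $\Rightarrow i\in\{1,3,5\}$, multiplicative $\Rightarrow i\in\{7,\dots,12\}$, and good $\Rightarrow i\notin\{1,3,5\}$, which are the three claimed lists at $2$.

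For $p=3$ the computation is shorter because $24\equiv0\pmod3$ gives $c_4\equiv b_2^2\pmod3$. Tabulating $b_2\bmod3$ over the twelve values of $R_i$ shows that $3\mid c_4$ precisely when $b_2\equiv0\pmod3$, namely for $i\in\{1,2,9,10\}$, and $3\nmid c_4$ otherwise. Consequently additive reduction at $3$ forces $i\in\{1,2,9,10\}$ and multiplicative reduction forces $i\in\{3,\dots,8,11,12\}$, while good reduction imposes no condition on $c_4\bmod3$ and so leaves all twelve $R_i$ available, matching the stated lists. The one genuinely delicate step is the good-versus-additive split at $2$ for the curves with $a_1=0$: there the parity of $c_4$ carries no information, and one must propagate $c_4\bmod16$ and $c_6\bmod32$ through $1728\,\Delta=c_4^3-c_6^2$ to pin down $\val_2(\Delta)$; every other case is immediate from $c_4\bmod2$ or from $b_2^2\bmod3$.
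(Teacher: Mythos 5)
Your proposal is correct, and its first half coincides with the paper's: both arguments start from Corollary~\ref{corcong} and extract the $2$-adic and $3$-adic valuations of $c_4$ and $c_6$ attached to each $R_i$ (your observations $c_4\equiv a_1\bmod 2$, $16\mid c_4$ when $a_1=0$, $c_6\equiv 8a_3\bmod{32}$, and $c_4\equiv b_2^2\bmod 3$ reproduce exactly the valuation table in the paper's proof). Where you diverge is the final step. The paper stops at that valuation table and invokes Papadopoulos's Tableaux II and IV \cite{MR1225948} to translate $\left(v_p(c_4),v_p(c_6)\right)$ into the reduction type; you instead finish elementarily, using the paper's own Section~\ref{section2} characterization of good/multiplicative/additive reduction in terms of $v_p(c_4)$ and $v_p(\Delta)$ for a minimal model, and recovering $v_p(\Delta)$ from $1728\,\Delta=c_4^3-c_6^2$. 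Your route is self-contained and makes visible exactly where the work lies (the good-versus-additive split at $2$ for $a_1=0$, settled by $v_2(c_6^2)=6$ versus $v_2(c_6^2)\ge 10$ against $v_2(c_4^3)\ge 12$); the paper's citation is shorter and comes bundled with the finer Kodaira-type information, which is not needed for this statement. Both are complete proofs; do note, as you implicitly do, that the valuation criteria for reduction type are only valid because the reduced minimal model is a global minimal model.
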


An immediate consequence of Theorem~\ref{thmred} is:
\begin{maincor}\label{coraddred}\textit{
An elliptic curve $E/\mathbb{Q}$ has additive reduction at $2$ if and only if $\operatorname{rmm}(E)=R_{i}$ where $i=1,3,5$.}
\end{maincor}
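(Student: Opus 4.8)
The plan is to deduce the corollary directly from Theorem~\ref{thmred}, using only the standard fact that at the prime $2$ an elliptic curve $E/\mathbb{Q}$ has exactly one of three reduction types---good, multiplicative, or additive---so that these three cases partition all such curves. The forward implication is immediate: it is precisely the $p=2$ instance of part $(iii)$, which asserts that additive reduction at $2$ forces $\operatorname{rmm}(E)=R_{i}$ with $i\in\{1,3,5\}$.

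For the converse, I would read off from Theorem~\ref{thmred} the three index sets occurring at $2$: good reduction permits $i\in\{2,4,6,7,8,9,10,11,12\}$, multiplicative reduction permits $i\in\{7,8,9,10,11,12\}$, and additive reduction permits $i\in\{1,3,5\}$. The key---and only---observation is that $\{1,3,5\}$ is disjoint from the union of the first two sets. Hence if $\operatorname{rmm}(E)=R_{i}$ with $i\in\{1,3,5\}$, then by the contrapositives of parts $(i)$ and $(ii)$ the curve $E$ can have neither good nor multiplicative reduction at $2$; by the trichotomy it must therefore have additive reduction at $2$.

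There is no genuine obstacle here, which is exactly why the statement is flagged as an immediate consequence; the one thing to verify is the disjointness of $\{1,3,5\}$ from $\{2,4,6,7,8,9,10,11,12\}$ and from $\{7,8,9,10,11,12\}$, which is clear by inspection. I would remark that the analogous argument fails to characterize additive reduction at $3$ by a single $R_{i}$-condition: part $(i)$ shows good reduction at $3$ permits all indices $1$--$12$, so the additive index set $\{1,2,9,10\}$ at $3$ is not disjoint from the good-reduction set and the elimination step breaks down. The clean biconditional is thus special to the prime $2$, where the indices $1,3,5$ occur for no other reduction type.
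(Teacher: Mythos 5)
Your argument is correct and is exactly the one the paper intends: the corollary is stated there as an immediate consequence of Theorem~\ref{thmred}, obtained by combining the forward implication of part $(iii)$ at $p=2$ with the observation that the index set $\{1,3,5\}$ is disjoint from those permitted by good and multiplicative reduction at $2$, so the trichotomy of reduction types forces the converse. Your closing remark about why the same elimination fails at $p=3$ is a correct and worthwhile observation, though not part of the paper's (implicit) proof.
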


In fact, Corollary~\ref{corcong} allows us to conclude that the reduced minimal model of $E$ is uniquely determined by $c_6$ (resp. $c_6 / 2)$ modulo $24$ if $c_6$ is odd (resp. even) (see Proposition~\ref{mod24}). In Section~\ref{section3}, we explicitly classify the reduced minimal model of elliptic curves with a non-trivial torsion subgroup (see Theorem \ref{thm1}) by utilizing Proposition~\ref{mod24}. We note that the proof is computer-assisted, and only one case is done explicitly in this paper. For the remaining cases, the reader is referred to our code on GitHub \cite{GitHubRRM}, which verifies the result by exhausting all possible congruences that the parameters of $E_T$ can take modulo $24$. All coding for this article was done on SageMath~\cite{sagemath}. 

We conclude this article by considering the Cremona database \cite{cremonadata} of elliptic curves, which consists of all elliptic curves over $\mathbb{Q}$ of conductor at most $500 \ 000$. Specifically, for each of the fifteen possible torsion subgroups $T$, we compute the percentage of elliptic curves $E$ with $E(\mathbb{Q})_{\text{tors}}\cong T$ in the Cremona database that have $\operatorname{rmm}(E)=R_i$ for $1\leq i \leq 12$.

\section{Preliminaries}\label{section2}

We start by reviewing some relevant facts about elliptic curves. For further
details, see~\cite[Chapter~3]{MR1628193} and \cite{MR2514094}. Let $E/\mathbb{Q}$ be an elliptic curve given by the (affine) Weierstrass model
\begin{equation}
E:y^{2}+a_{1}xy+a_{3}y=x^{3}+a_{2}x^{2}+a_{4}x+a_{6}\label{ch:inintroweier}%
\end{equation}
with each $a_{j}\in\mathbb{Q}$. From (\ref{ch:inintroweier}), we define%
\begin{equation}%
\begin{array}
[c]{l}%
c_{4}=a_{1}^{4}+8a_{1}^{2}a_{2}-24a_{1}a_{3}+16a_{2}^{2}-48a_{4},\\
c_{6}=-\left(  a_{1}^{2}+4a_{2}\right)  ^{3}+36\left(  a_{1}^{2}%
+4a_{2}\right)  \left(  2a_{4}+a_{1}a_{3}\right)  -216\left(  a_{3}^{2}%
+4a_{6}\right)  .
\end{array}
\label{basicformulas}%
\end{equation}
The quantities $c_{4}$ and $c_{6}$ are the \textit{invariants associated to
the Weierstrass model} of $E$. The discriminant of $E$ is then defined as
$\Delta_{E}=\frac{c_{4}^{3}-c_{6}^{2}}{1728}$. We define the
\textit{signature} of $E$ to be $\operatorname*{sig}\!\left(  E\right)
=\left(  c_{4},c_{6},\Delta_{E}\right)  $. Each elliptic curve $E/\mathbb{Q}$ is $\mathbb{Q}$-isomorphic to a \textit{global minimal model} $E^{\text{min}}$ where
$E^{\text{min}}$ is given by a Weierstrass model of the form
(\ref{ch:inintroweier}) with the property that each $a_{j}\in\mathbb{Z}$ and
its discriminant $\Delta_{E}^{\text{min}}$ satisfies
\[
\Delta_{E}^{\text{min}}=\min\!\left\{  \left\vert \Delta_{F}\right\vert \mid
F\text{ is }\mathbb{Q}\text{-isomorphic to }E\text{, and }F\text{ is given by (\ref{ch:inintroweier}%
) with }a_{j}\in\mathbb{Z}\right\}  .
\]
We call $\Delta_{E}^{\text{min}}$ the \textit{minimal discriminant} of $E$.
The \textit{minimal signature} of $E$ is $\operatorname*{sig}_{\text{min}%
}(E)=\operatorname*{sig}\!\left(  E^{\text{min}}\right)  =\left(  c_{4}%
,c_{6},\Delta_{E}^{\text{min}}\right)  $, where $c_{4}$ and $c_{6}$ are the
invariants associated to a global minimal model of $E$. 
For a prime $p$, we say that $E$ has
\begin{align*}
\mathit{good\ reduction\ at\ }  & p\text{ if }p\nmid\Delta;\\
\mathit{multiplicative\ reduction\ at\ }  & p\text{ if }p|\Delta\text{ and
}p\nmid c_{4};\\
\mathit{additive\ reduction\ at\ }  & p\text{ if }p|\gcd(c_{4},\Delta).
\end{align*}


For an elliptic curve $E/\mathbb{Q}$, the Mordell-Weil group $E\!\left(\mathbb{Q}
\right)  $ is a finitely-generated abelian group. By Mazur's Torsion Theorem,
there are exactly fifteen possibilities for the torsion subgroup $E\!\left(\mathbb{Q}\right)  _{\text{tors}}$ of $E\!\left(\mathbb{Q}\right)  $:

\begin{theorem}
[Mazur's Torsion Theorem \cite{MR488287}]\label{MazurTorThm}Let $E/\mathbb{Q}$ be an elliptic curve and let $C_{m}$ denote the cyclic group of order $m$. Then
\[
E\!\left(  \mathbb{Q}\right)  _{\text{tors}}\cong\left\{
\begin{array}
[c]{ll}%
C_{m} & \text{for }m=1,2,\ldots,10,12,\\
C_{2}\times C_{2m} & \text{for }m=1,2,3,4.
\end{array}
\right.
\]
\end{theorem}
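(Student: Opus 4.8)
The statement is Mazur's Torsion Theorem, a result of such depth that the paper quite reasonably cites \cite{MR488287} rather than reproving it; nevertheless, the plan I would follow is the modular-curves strategy that Mazur pioneered. The essential translation is that an elliptic curve $E/\Q$ equipped with a rational point of exact order $N$ is the same datum as a non-cuspidal rational point on the modular curve $X_1(N)$, and that a curve carrying a rational subgroup $C_2\times C_{2m}$ corresponds to a rational point on $X_1(2,2m)$. Thus the entire theorem reduces to deciding, level by level, whether these modular curves have rational points away from the cusps. The fifteen groups appearing in the statement are precisely those for which the relevant curve has genus $0$ with a rational point, hence infinitely many: the genus-zero curves $X_1(N)$ for $N\in\{1,\dots,10,12\}$ together with $X_1(2,2m)$ for $m\le 4$ supply exactly the listed torsion structures. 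The real content is therefore to show that every other level admits no non-cuspidal rational point.

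By a standard reduction the problem splits into two parts. First, if a prime $p\ge 11$ divided the order of a torsion point, then $E$ would acquire a rational point of order $p$, hence a non-cuspidal point on $X_1(p)$; ruling this out for \emph{all} $p\ge 11$ forces the order to be divisible only by $2,3,5,7$. Second, one must bound the prime powers and exclude the remaining sporadic composite levels --- for instance orders $14$ and $15$ and the prime powers $16,25,27$ --- each of which corresponds to a specific $X_1(N)$. The genus-one cases $X_1(11)$, $X_1(14)$, $X_1(15)$ are the pleasant finite computations: each is an elliptic curve of rank $0$ over $\Q$, so one exhibits a Weierstrass model, verifies the Mordell--Weil group is finite, and checks directly that every rational point is a cusp. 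The higher-genus sporadic levels are finite in number and handled by finiteness of rational points on curves of genus $\ge 2$.

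The hard part, by a wide margin, is the infinite family: showing $E(\Q)$ has no point of prime order $p$ for every prime $p\ge 11$. This is the heart of Mazur's work on the Eisenstein ideal. A rational point of order $p$ yields a point $x\in X_1(p)(\Q)$ lying over a non-cuspidal rational point of $X_0(p)$. The plan is to study the Jacobian $J_0(p)$ through the Eisenstein ideal $\mathfrak{I}$ in its Hecke algebra and pass to the Eisenstein (equivalently, winding) quotient $\tilde J$; the decisive input is that $\tilde J(\Q)$ is finite, i.e. has rank $0$. Granting this, one runs a formal-immersion argument: for a suitable auxiliary prime $\ell\ne p$, the composite $X_0(p)\to J_0(p)\to\tilde J$ is a formal immersion at the cusp $\infty$ in characteristic $\ell$, so any rational point reducing to $\infty$ modulo $\ell$ must coincide with $\infty$. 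Since the $\mathbb{F}_\ell$-points of interest are accounted for by the cusps, the putative non-cuspidal rational point cannot exist.

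The principal obstacle is establishing that the Eisenstein quotient has rank $0$ and that the formal-immersion hypothesis genuinely holds. This is where the depth lies: it requires the Eichler--Shimura relations, an explicit description of the Hecke algebra modulo the Eisenstein ideal, tight control of the torsion in $J_0(p)(\Q)$, and a careful analysis of the special fiber of $X_0(p)$ at $p$ via the Deligne--Rapoport model. These are exactly the ingredients that make Mazur's theorem a landmark rather than an exercise, and they are why citing \cite{MR488287} is the right move here; in practice my ``proof'' would be an exposition of that argument rather than an independent derivation.
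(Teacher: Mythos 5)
The paper offers no proof of this statement: it is quoted verbatim as Mazur's Torsion Theorem with the citation \cite{MR488287}, so there is nothing internal to compare your argument against. Your sketch is a faithful and accurate outline of Mazur's actual proof (modular-curve translation, genus-zero levels, rank-zero computations for the genus-one curves, and the Eisenstein-quotient/formal-immersion argument for primes $p\ge 11$), and your conclusion that citing the original is the right course of action matches exactly what the paper does.
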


Now let $E_{T}$ be the parameterized family of elliptic curves given in Table~\ref{ta:ETmodel}
for the listed $T$. These fifteen families of elliptic curves parameterize all
elliptic curves $E/\mathbb{Q}$ with a non-trivial torsion point, as made precise by the following proposition:

\begin{proposition}
[{\cite[Proposition 4.3]{2020arXiv200101016B}}]\label{rationalmodels}Let $E/\mathbb{Q}$ be an elliptic curve and suppose further that $T\hookrightarrow E\!\left(\mathbb{Q}\right)  _{\text{tors}}$ where $T$ is one of the fourteen non-trivial torsion
subgroups allowed by Theorem~\ref{MazurTorThm}. Then there are integers
$a,b,d$ such that

$\left(  1\right)  $ If $T\neq C_{2},C_{3},C_{2}\times C_{2}$, then $E$ is
$\mathbb{Q}$-isomorphic to $E_{T}\!\left(  a,b\right)  $ with $\gcd\!\left(
a,b\right)  =1$ and $a$ is positive.

$\left(  2\right)  $ If $T=C_{2}$ and $C_{2}\times C_{2}\not \hookrightarrow
E(\mathbb{Q})$, then $E$ is $\mathbb{Q}$-isomorphic to $E_{T}\!\left(
a,b,d\right)  $ with $d\neq1,b\neq0$ such that $d$ and $\gcd\!\left(
a,b\right)  $ are positive squarefree integers.

$\left(  3\right)  $ If $T=C_{3}$ and the $j$-invariant of $E$ is not $0$,
then $E$ is $\mathbb{Q}$-isomorphic to $E_{T}\!\left(  a,b\right)  $ with
$\gcd\!\left(  a,b\right)  =1$ and $a$ is positive.

$\left(  4\right)  $ If $T=C_{3}$ and the $j$-invariant of $E$ is $0$, then
$E$ is either $\mathbb{Q}$-isomorphic to $E_{T}\!\left(  24,1\right)  $ or to
the curve $E_{C_{3}^{0}}\!\left(  a\right)  :y^{2}+ay=x^{3}$ for some positive
cubefree integer $a$.

$\left(  5\right)  $ If $T=C_{2}\times C_{2}$, then $E$ is $\mathbb{Q}%
$-isomorphic to $E_{T}\!\left(  a,b,d\right)  $ with $\gcd\!\left(
a,b\right)  =1$, $d$ positive squarefree, and $a$ is even.
\end{proposition}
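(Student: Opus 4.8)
The plan is to prove the proposition one torsion subgroup at a time, in each case building the family $E_T$ from the rational parameterization of the relevant modular curve $X_1(n)$ or $X_1(2,n)$ and then normalizing the parameters arithmetically. The organizing principle is that a rational point of order $n \geq 4$ rigidifies a Weierstrass model enough to kill all quadratic-twist freedom, whereas points of order $2$ and $3$ must be treated separately.

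For part (1), where $T$ always contains a point $P$ of order $n \geq 4$, I would begin with the Tate normal form: any such $E/\mathbb{Q}$ is $\mathbb{Q}$-isomorphic to $y^2 + (1-c)xy - by = x^3 - bx^2$ with $P = (0,0)$, and the requirement that $P$ have order exactly $n$ (together with the full $C_2 \times C_{2m}$ structure where relevant) cuts out a genus-zero curve. Kubert's tables \cite{MR0434947} furnish an explicit rational parameterization, expressing $b$ and $c$ as rational functions of a single parameter $t \in \mathbb{P}^1(\mathbb{Q})$. Writing $t = a/b$ in lowest terms (unique up to a common sign), I would clear denominators by an admissible change of variables $(x,y) \mapsto (u^2 x, u^3 y)$ for a suitable rational $u$ built from $a$ and $b$, producing the coefficient-homogeneous integral model $E_T(a,b)$ of Table \ref{ta:ETmodel}; replacing $(a,b)$ by $(-a,-b)$ then allows $a > 0$. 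The point to verify is that scaling acts transitively on each fiber of $t$ with no residual twist, which holds precisely because a point of order $\geq 4$ determines the scaling parameter $u$ up to sign.

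The $C_3$ cases (parts (3) and (4)) need a different normal form, as $C_3$ has no point of order $\geq 4$. A point of order $3$ is a flex, so $E$ is $\mathbb{Q}$-isomorphic to $y^2 + a_1 xy + a_3 y = x^3$ with the point at the origin; the isomorphism invariant is $t = a_1^3/a_3$, scaling again acts transitively on each fiber, and homogenizing $t = a/b$ yields $E_{C_3}(a,b)$ with no twist parameter. The exceptional case is $j = 0$, where $a_1 = 0$ and the curve acquires extra automorphisms; such curves form the cubic-twist family $E_{C_3^0}(a)$ of part (4), together with the single curve $E_{C_3}(24,1)$ lying on the boundary of the generic parameterization.

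The main obstacle is in parts (2) and (5), where $2$-torsion leaves a genuine quadratic-twist degree of freedom. Writing $E$ with its $2$-torsion at the origin as $y^2 = x^3 + a_2 x^2 + a_4 x$ (or with full $2$-torsion as $y^2 = x(x - e_1)(x - e_2)$), the scaling $(a_2, a_4) \mapsto (a_2/u^2, a_4/u^4)$ reaches only quadratic multiples, so each fiber of the $j$-invariant splits into quadratic twists indexed by a squarefree integer $d$. The delicate work is the arithmetic normalization: choosing $d$ squarefree of the correct sign, clearing denominators so that $\gcd(a,b)$ is squarefree with $b \neq 0$ and $d \neq 1$ (part (2)) or so that $a$ is even with $\gcd(a,b) = 1$ (part (5)), and checking that the resulting $E_T(a,b,d)$ is genuinely $\mathbb{Q}$-isomorphic to $E$ without double counting. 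Tracking the content of the homogenizing transformation against the twist parameter through these cases is where essentially all of the effort lies.
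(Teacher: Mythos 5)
The paper does not prove this proposition at all: it is imported verbatim as Proposition 4.3 of the cited reference \cite{2020arXiv200101016B}, so there is no in-paper argument to compare against. Your outline follows the standard route, which is also the one the cited source takes: Tate normal form and Kubert's parameterizations of $X_1(n)$ and $X_1(2,n)$ when the torsion contains a point of order at least $4$, the flex normal form $y^2+a_1xy+a_3y=x^3$ with invariant $a_1^3/a_3$ for $C_3$ (splitting off the $j=0$ cubic-twist family and the single curve $E_{C_3}(24,1)$), and a residual quadratic-twist parameter $d$ only for $C_2$ and $C_2\times C_2$, where $2$-torsion fails to rigidify the model. All of these structural observations are correct.

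As a proof, however, what you have written is a plan: every step that carries actual content is named and then deferred. You do not verify that homogenizing Kubert's one-parameter families lands exactly on the models of Table \ref{ta:ETmodel} with $\gcd(a,b)=1$; the reduction to $a>0$ via $(a,b)\mapsto(-a,-b)$ needs to be checked family by family (for some rows the coefficients are homogeneous of even degree and are fixed, for others one must compose with $y\mapsto -y-a_1x-a_3$); and the normalizations in parts $(2)$ and $(5)$ --- why for $C_2$ one can only force $\gcd(a,b)$ to be \emph{squarefree} rather than $1$, why $d\neq 1$ exactly encodes the absence of full $2$-torsion, and why for $C_2\times C_2$ one may take $a$ even with $\gcd(a,b)=1$ and $d$ positive squarefree --- are precisely the statements whose proof you describe as ``where essentially all of the effort lies'' without carrying it out. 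The approach is the right one and consistent with \cite{2020arXiv200101016B}, but the proposal would need those computations supplied before it constitutes a proof; within the present paper the correct move is simply the citation.
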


\vspace{-0.85em} {\renewcommand*{\arraystretch}{1.18}
\begin{longtable}{C{0.6in}C{1in}C{1.5in}C{1.5in}C{0.5in}}
\caption[Weierstrass Model of $E_{T}$]{The Weierstrass Model of $E_{T}:y^{2}+a_{1}xy+a_{3}y=x^{3}+a_{2}x^{2}+a_{4}x$}\\
\hline
$T$ & $a_{1}$ & $a_{2}$ & $a_{3}$ & $a_{4}$ \\
\hline
\endfirsthead
\caption[]{\emph{continued}}\\
\hline
$T$ & $a_{1}$ & $a_{2}$ & $a_{3}$ & $a_{4}$\\
\hline
\endhead
\hline
\multicolumn{2}{r}{\emph{continued on next page}}
\endfoot
\hline
\endlastfoot
$C_{2}$ & $0$ & $2a$ & $0$ & $a^{2}-b^{2}d$ \\\hline
$C_{3}^{0}$ & $0$ & $0$ & $a$ & $0$ \\\hline
$C_{3}$ & $a$ & $0$ & $a^{2}b$ & $0$ \\\hline
$C_{4}$ & $a$ & $-ab$ & $-a^{2}b$ & $0$ \\\hline
$C_{5}$ & $a-b$ & $-ab$ & $-a^{2}b$ & $0$ \\\hline
$C_{6}$ & $a-b$ & $-ab-b^{2}$ & $-a^{2}b-ab^{2}$ & $0$\\\hline
$C_{7}$ & $a^{2}+ab-b^{2}$ & $a^{2}b^{2}-ab^{3}$ & $a^{4}b^{2}-a^{3}b^{3}$ & $0$ \\\hline
$C_{8}$ & $-a^{2}+4ab-2b^{2}$ & $-a^{2}b^{2}+3ab^{3}-2b^{4}$ & $-a^{3}b^{3}+3a^{2}
b^{4}-2ab^{5}$ & $0$ \\\hline
$C_{9}$ & $a^{3}+ab^{2}-b^{3}$ & $
a^{4}b^{2}-2a^{3}b^{3}+
2a^{2}b^{4}-ab^{5}
$ & $a^{3}\cdot a_{2}$
& $0$ \\\hline
$C_{10}$ &$
a^{3}-2a^{2}b-
2ab^{2}+2b^{3}
$ & $-a^{3}b^{3}+3a^{2}b^{4}-2ab^{5}$ & $(a^{3}-3a^{2}b+ab^{2})\cdot a_{2}$ & $0$\\\hline
$C_{12}$ & $
-a^{4}+2a^{3}b+2a^{2}b^{2}-
8ab^{3}+6b^{4}
$ & $b(a-2b)(a-b)^{2}(a^{2}-3ab+3b^{2})(a^{2}-2ab+2b^{2})
$ & $a(b-a)^3 \cdot a_{2} $& $0$ \\\hline
$C_{2}\times C_{2}$ & $0$ & $ad+bd$ & $0$ & $abd^{2}$ \\\hline
$C_{2}\times C_{4}$ & $a$ & $-ab-4b^{2}$ & $-a^{2}b-4ab^{2}$ & $0$ \\\hline
$C_{2}\times C_{6}$ & $-19a^{2}+2ab+b^{2}$ & $
-10a^{4}+22a^{3}b-
14a^{2}b^{2}+2ab^{3}
$ & $
90a^{6}-198a^{5}b+116a^{4}b^{2}+
4a^{3}b^{3}-14a^{2}b^{4}+2ab^{5}
$ & $0$ \\\hline
$C_{2}\times C_{8}$ & $
-a^{4}-8a^{3}b-
24a^{2}b^{2}+64b^{4}$ & $-4ab^{2}(a+2b)(a+4b)^{2}(a^{2}+4ab+8b^{2}) $ & $ -2b(a+4b)(a^{2}-8b^{2}) \cdot a_{2}
$ & $0$
\label{ta:ETmodel}	
\end{longtable}}

Next, let
\[
\left(  \alpha_{T},\beta_{T},\gamma_{T}\right)  =\left\{
\begin{array}
[c]{cl}%
\left(  \alpha_{T}\!\left(  a,b,d\right)  ,\beta_{T}\!\left(  a,b,d\right)
,\gamma_{T}\!\left(  a,b,d\right)  \right)   & \text{if }T=C_{2},C_{2}\times
C_{2},\\
\left(  \alpha_{T}\!\left(  a,b\right)  ,\beta_{T}\!\left(  a,b\right)
,\gamma_{T}\!\left(  a,b,d\right)  \right)   & \text{if }T\neq C_{2}%
,C_{2}\times C_{2}.
\end{array}
\right.
\]
be as defined in \cite[Tables 4, 5, 6]{2020arXiv200101016B}. These expressions
are also found in \cite[definitions.sage]{GitHubRRM}. By~\cite[Lemma~2.9]{2020arXiv200101016B}, $\operatorname*{sig}\!\left(  E_{T}\right)
=\left(  \alpha_{T},\beta_{T},\gamma_{T}\right)  $. Now write
\begin{equation}
a=\left\{
\begin{array}
[c]{ll}%
c^{3}d^{2}e\text{ with }d,e\text{ positive squarefree integers such that }\gcd\!\left(
d,e\right)  =1 & \text{if }T=C_{3},\\
c^{2}d\text{ with }d\text{ a squarefree integer} & \text{if }T=C_{4}.
\end{array}
\right.  \label{Cdef}
\end{equation}
Then if the parameters of $E_{T}$ satisfy the conclusion of Proposition
\ref{rationalmodels}, \cite[Theorem 4.4]{2020arXiv200101016B} gives that~$\operatorname*{sig}_{\text{min}}(E_{T})=\left(  u_{T}^{-4}\alpha_{T}%
,u_{T}^{-6}\beta_{T},u_{T}^{-12}\gamma_{T}\right)  $ where%
\[
{\renewcommand{\arraystretch}{1.2}\renewcommand{\arraycolsep}{.18cm}%
\begin{array}
[c]{cccccccc}\hline
T & C_{5},C_{7},C_{9} & C_{6},C_{8},C_{10},C_{12},C_{2}\times C_{2} &
C_{2},C_{2}\times C_{4} & C_{2}\times C_{6} & C_{2}\times C_{8} & C_{3} &
C_{4}\\\hline
u_{T} & 1 & 1\ \text{or }2 & 1,2,\ \text{or\ }4 & 1,4,\ \text{or\ }16 &
1,16,\ \text{or\ }64 & c^{2}d & c\ \text{or\ }2c\\\hline
\end{array}
}%
\]
In fact, \cite[Theorem 4.4]{2020arXiv200101016B} provides necessary and
sufficient conditions on the parameters of $E_{T}$ to determine $u_{T}$.

\section{Determining the Reduced Minimal Model from \texorpdfstring{$c_6$}{c6}}\label{LKC}

The \textit{reduced minimal model} of $E$ is a global minimal model for $E$,
which satisfies the additional property that the Weierstrass coefficients of
the model satisfy $a_{1},a_{3}\in\left\{  0,1\right\}  $ and $a_{2}\in\left\{
-1,0,1\right\}  $. The reduced minimal model of $E$ is unique, and we set
$\operatorname{rmm}\!\left(  E\right)  =\left(  a_{1},a_{2},a_{3}\right)  $.
In particular, there are twelve possibilities for $\operatorname{rmm}\!\left(
E\right)  $, and for $1\leq i\leq12$, we set $R_{i}=\left(  a_{1},a_{2}%
,a_{3}\right)  $ as given in (\ref{ta:Ris}). The reduced minimal model of $E$
is obtained from the Laska-Kraus-Connell Algorithm:

\begin{algorithm}[H]    
    \caption{\texttt{The Laska-Kraus-Connell Algorithm}}\label{LKCA}
    \begin{algorithmic}[1]
        \Require{$\operatorname{sig}_{\text{min}}(E)=(c_4,c_6,\Delta)$ for $E/\mathbb{Q}$}
        \Ensure{The reduced minimal model of $E$}
        \State Compute $b_{2}=-c_{6}\ \operatorname{mod}12\in\left\{  -5,-4,\ldots,6\right\}$
        \State Compute $b_{4}=\frac{b_{2}^{2}-c_{4}}{24}$ 
        \State Compute $b_{6}=\frac{-b_{2}^{3}+36b_{2}b_{4}-c_{6}}{216}$
        \State Compute $a_{1}=b_{2}\ \operatorname{mod}2\in\left\{  0,1\right\} $
         \State Compute $ a_{2}=\frac{b_{2}-a_{1}}{4} $
          \State Compute $ a_{3}=b_{6} \ \operatorname{mod}2\in\left\{0,1\right\}$
           \State Compute $a_{4}=\frac{b_{4}-a_{1}a_{3}}{2} $
            \State Compute $ a_{6}=\frac{b_{6}-a_{3}}{4}$
         \State \textbf{return} $y^{2}+a_{1}xy+a_{3}y=x^{3}+a_{2}x^{2}+a_{4}x+a_{6}$

    \end{algorithmic}
\end{algorithm}

We note that the original Laska-Kraus-Connell Algorithm only requires $\operatorname*{sig}\!\left(  E\right)$ for an elliptic curve $E/\mathbb{Q}$ as input (see \cite[Section
3.2]{MR1628193}). In particular, Kraus's Theorem \cite{MR1024419} is used to deduce $\operatorname{sig}_{\text{min}}(E)$ from $\operatorname*{sig}\!\left(  E\right)$.
For our purposes,
we will suppose that we have already computed $\operatorname*{sig}%
_{\text{min}}(E)$. In fact, knowledge of $\operatorname{rmm}\!\left(
E\right)  $ and $\operatorname*{sig}_{\text{min}}(E)$ determines the reduced
minimal model of $E$:

\begin{lemma}\label{lemrmm}
Let $E/\mathbb{Q}$ be an elliptic curve with $\operatorname*{sig}_{\text{min}}(E)=\left(
c_{4},c_{6},\Delta\right)  $ and $\operatorname{rmm}(E)=R_{i}$, where
$R_{i}=\left(  a_{1},a_{2},a_{3}\right)  $ is as given in \eqref{ta:Ris}. Then the
reduced minimal model of $E$ is given by
\begin{equation}
y^{2}+a_{1}xy+a_{3}y=x^{3}+a_{2}x^{2}-\frac{A_i}
{48}x-\frac{B_i}{1728},\label{rrmmodels}%
\end{equation}
where $A_i$ and $B_i$ are as given in Table~\ref{ta:RRMmodel}.
\end{lemma}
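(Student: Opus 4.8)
The plan is to exploit the fact that the reduced minimal model is itself a global minimal model, so the invariants $c_4,c_6$ attached to it via the formulas \eqref{basicformulas} must coincide with the minimal signature $\operatorname{sig}_{\min}(E)=(c_4,c_6,\Delta)$. Under the hypothesis $\operatorname{rmm}(E)=R_i$, the three coefficients $a_1,a_2,a_3$ are the fixed integers recorded in \eqref{ta:Ris}, so the only unknowns remaining in the model are $a_4$ and $a_6$. Inspecting \eqref{basicformulas}, one sees that $a_4$ occurs in $c_4$ only through the term $-48a_4$ and in $c_6$ only through $36(a_1^2+4a_2)(2a_4+a_1a_3)$, while $a_6$ occurs only in $c_6$, through $-864a_6$. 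Thus, with $(a_1,a_2,a_3)$ frozen, the two defining equations form a triangular linear system in $(a_4,a_6)$, which I would solve in order.

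First I would read off $a_4$ from the $c_4$-equation: since $c_4=a_1^4+8a_1^2a_2-24a_1a_3+16a_2^2-48a_4$, solving gives $a_4=-A_i/48$ with $A_i=c_4-a_1^4-8a_1^2a_2+24a_1a_3-16a_2^2$. Substituting this value of $a_4$ into the $c_6$-equation, writing $b_2=a_1^2+4a_2$, and isolating $a_6$ yields $a_6=-B_i/1728$, where $B_i=2\bigl(b_2^3-36b_2(2a_4+a_1a_3)+216a_3^2+c_6\bigr)$ once $2a_4$ is replaced by its expression in $c_4$. Specializing these two uniform formulas to each of the twelve triples $(a_1,a_2,a_3)=R_i$ produces the polynomials $A_i,B_i$ (linear in $c_4,c_6$ with integer constant terms) recorded in Table~\ref{ta:RRMmodel}; for instance $R_1=(0,0,0)$ gives $A_1=c_4$ and $B_1=2c_6$, while $R_7=(1,0,0)$ gives $A_7=c_4-1$ and $B_7=3c_4+2c_6-1$. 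The resulting model is exactly \eqref{rrmmodels}, and it is the reduced minimal model by the uniqueness of such a model together with the hypothesis that its reduced coefficients are $R_i$.

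There is no conceptual obstacle here: the system is linear and triangular, and integrality of $a_4,a_6$ is automatic, since the reduced minimal model is already known to exist with integer coefficients, so the solution of the system must be the integer values $a_4,a_6$ of that model. The only real work is bookkeeping---carrying out the substitution for all twelve values of $R_i$ and checking the signs and constant terms against Table~\ref{ta:RRMmodel}; this is routine but voluminous, and is where any error would hide. As an independent consistency check, I would verify that these formulas agree with the output of the Laska-Kraus-Connell Algorithm~\ref{LKCA}: the quantities $b_2,b_4,b_6$ computed there are precisely the standard $b$-invariants $a_1^2+4a_2$, $2a_4+a_1a_3$, and $a_3^2+4a_6$, and the algorithm's final formulas $a_4=(b_4-a_1a_3)/2$ and $a_6=(b_6-a_3)/4$ invert the same relations, so the two derivations must produce identical $a_4,a_6$.
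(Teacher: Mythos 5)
Your proposal is correct and follows essentially the same route as the paper: fix $(a_1,a_2,a_3)=R_i$, observe that the reduced minimal model's invariants must equal $\operatorname{sig}_{\min}(E)$, and invert the (triangular) linear system from \eqref{basicformulas} to recover $a_4=-A_i/48$ and $a_6=-B_i/1728$. The only difference is presentational --- you solve once symbolically and then specialize to the twelve triples, whereas the paper specializes first and verifies the twelve resulting systems by computer --- and your spot checks (e.g.\ $A_7=c_4-1$, $B_7=3c_4+2c_6-1$) agree with Table~\ref{ta:RRMmodel}.
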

\vspace{-0.85em} {\renewcommand*{\arraystretch}{1.24}
\begin{longtable}{C{.75in}C{0.3in}C{0.3in}C{0.3in}C{1.0in}C{1.5in}}
\caption{The reduced minimal model of $E$, $y^{2}+a_{1}xy+a_{3}y=x^{3}+a_{2}x^{2}-\frac{A_i}{48}x-\frac{B_i}{1728}$, in terms of $R_i$ and  $\operatorname*{sig}_{\text{min}}(E)=\left(
c_{4},c_{6},\Delta\right)  $}\\
\hline
$\operatorname{rmm}(E)$ & $a_{1}$ & $a_{2}$ & $a_{3}$ & $A_i$ & $B_i$ \\
\hline
\endfirsthead
\caption[]{\emph{continued}}\\
\hline
$\operatorname{rmm}(E)$ & $a_{1}$ & $a_{2}$ & $a_{3}$ & $A$ & $B$\\
\hline
\endhead
\hline
\multicolumn{6}{r}{\emph{continued on next page}}
\endfoot
\hline
\endlastfoot
$R_{1}$ & $0$ & $0$ & $0$ & $c_{4}$ & $2c_{6}$\\\hline
$R_{2}$ & $0$ & $0$ & $1$ & $c_{4}$ & $2(c_{6}+216)$\\\hline
$R_{3}$ & $0$ & $-1$ & $0$ & $c_{4}-16$ & $2(  -6c_{4}+c_{6}+32)
$\\\hline
$R_{4}$ & $0$ & $-1$ & $1$ & $c_{4}-16$ & $2(  -6c_{4}+c_{6}+248)
$\\\hline
$R_{5}$ & $0$ & $1$ & $0$ & $c_{4}-16$ & $2(  6c_{4}+c_{6}-32)  $\\\hline
$R_{6}$ & $0$ & $1$ & $1$ & $c_{4}-16$ & $2(  6c_{4}+c_{6}+184)
$\\\hline
$R_{7}$ & $1$ & $0$ & $0$ & $c_{4}-1$ & $3c_{4}+2c_{6}-1$\\\hline
$R_{8}$ & $1$ & $0$ & $1$ & $c_{4}+23$ & $3c_{4}+2c_{6}+431$\\\hline
$R_{9}$ & $1$ & $-1$ & $0$ & $c_{4}-9$ & $-9c_{4}+2c_{6}+27$\\\hline
$R_{10}$ & $1$ & $-1$ & $1$ & $c_{4}+15$ & $-9c_{4}+2c_{6}+459$\\\hline
$R_{11}$ & $1$ & $1$ & $0$ & $c_{4}-25$ & $15c_{4}+2c_{6}-125$\\\hline
$R_{12}$ & $1$ & $1$ & $1$ & $c_{4}-1$ & $15c_{4}+2c_{6}+307$%
\label{ta:RRMmodel}	
\end{longtable}}

\begin{proof}
Let $\operatorname{rmm}\!\left(  E\right)  =R_{i}$. For $1\leq i\leq12$, let
$F_{i}:y^{2}+a_{1}xy+a_{3}y=x^{3}+a_{2}x^{2}+a_{4}x+a_{6}$ be an elliptic
curve over $\mathbb{Q}\!\left(  a_{4},a_{6}\right)  $. Computing the invariants $c_{4}$ and $c_{6}$
of $F_{i}$ yields
\[
c_{4}=\left\{
\begin{array}
[c]{ll}%
-48a_{4} & \text{if }i=1\\
-48a_{4} & \text{if }i=2\\
-16\left(  3a_{4}-1\right)   & \text{if }i=3\\
-16\left(  3a_{4}-1\right)   & \text{if }i=4\\
-16\left(  3a_{4}-1\right)   & \text{if }i=5\\
-16\left(  3a_{4}-1\right)   & \text{if }i=6\\
-\left(  48a_{4}-1\right)   & \text{if }i=7\\
-\left(  48a_{4}+23\right)   & \text{if }i=8\\
-3\left(  16a_{4}-3\right)   & \text{if }i=9\\
-3\left(  16a_{4}+5\right)   & \text{if }i=10\\
-\left(  48a_{4}-25\right)   & \text{if }i=11\\
-\left(  48a_{4}-1\right)   & \text{if }i=12
\end{array}
\right. \quad  \text{ and } \quad c_{6}=\left\{
\begin{array}
[c]{ll}%
-864a_{6} & \text{if }i=1\\
-216\left(  4a_{6}+1\right)   & \text{if }i=2\\
-32\left(  9a_{4}+27a_{6}-2\right)   & \text{if }i=3\\
-8\left(  36a_{4}+108a_{6}+19\right)   & \text{if }i=4\\
-32\left(  -9a_{4}+27a_{6}+2\right)   & \text{if }i=5\\
-8\left(  -36a_{4}+108a_{6}+35\right)   & \text{if }i=6\\
-\left(  -72a_{4}+864a_{6}+1\right)   & \text{if }i=7\\
-\left(  -72a_{4}+864a_{6}+181\right)   & \text{if }i=8\\
-27\left(  8a_{4}+32a_{6}-1\right)   & \text{if }i=9\\
-27\left(  8a_{4}+32a_{6}+11\right)   & \text{if }i=10\\
-\left(  -360a_{4}+864a_{6}+125\right)   & \text{if }i=11\\
-\left(  -360a_{4}+864a_{6}+161\right)   & \text{if }i=12
\end{array}
\right.
\]
For each $i$, solving for $a_{4}$ and $a_{6}$ in terms of $c_{4}$ and $c_{6}$
allows us to verify that $a_{4}=-\frac{A_i}{48}$ and $a_{6}=-\frac{B_i}{1728}$ for $A_i$ and $B_i$ as given in Table~\ref{ta:RRMmodel} in terms
of $c_{4}$ and $c_{6}$. This result was verified on SageMath \cite{sagemath},
and the verification is found in \cite[Section3.ipynb]{GitHubRRM}.
\end{proof}

As a result, given an elliptic curve $E$ with invariants $c_{4}$ and $c_{6}$
associated to a global minimal model of $E$, the reduced minimal model is
uniquely determined upon computing $\operatorname{rmm}(E)$.

\begin{corollary}
\label{corcong}Let $E/\mathbb{Q}$ be an elliptic curve with $\operatorname*{sig}_{\text{min}}(E)=\left(
c_{4},c_{6},\Delta\right)  $ and $\operatorname*{rmm}(E)=R_i  $ as given in (\ref{ta:Ris}). Then $c_{4}$ and $c_{6}$
satisfy the congruences given below:

\begin{equation}
{\renewcommand{\arraystretch}{1.2}\renewcommand{\arraycolsep}{.5cm}
\begin{array}
[c]{ccccccc}\cline{1-3}\cline{5-7}%
i & c_{4} & c_{6} & \qquad & i & c_{4} & c_{6}\\\cline{1-3}%
\cline{5-7}
1 & \multicolumn{1}{r}{0\ \operatorname{mod}48} &
\multicolumn{1}{r}{0\ \operatorname{mod}\ 864} &  & 7 &
1\ \operatorname{mod}48 & 71\ \operatorname{mod}72\\\cline{1-3}\cline{5-7}%
2 & \multicolumn{1}{r}{0\ \operatorname{mod}48} &
\multicolumn{1}{r}{648\ \operatorname{mod}\ 864} &  & 8 &
25\ \operatorname{mod}48 & 35\ \operatorname{mod}72\\\cline{1-3}%
\cline{5-7}%
3 & \multicolumn{1}{r}{16\ \operatorname{mod}48} &
\multicolumn{1}{r}{64\ \operatorname{mod}\ 288} &  & 9 &
9\ \operatorname{mod}48 & 27\ \operatorname{mod}72\\\cline{1-3}\cline{5-7}%
4 & \multicolumn{1}{r}{16\ \operatorname{mod}48} &
\multicolumn{1}{r}{136\ \operatorname{mod}288} &  & 10 &
33\ \operatorname{mod}48 & 63\ \operatorname{mod}72\\\cline{1-3}%
\cline{5-7}%
5 & \multicolumn{1}{r}{16\ \operatorname{mod}48} &
\multicolumn{1}{r}{224\ \operatorname{mod}288} &  & 11 &
25\ \operatorname{mod}48 & 19\ \operatorname{mod}72\\\cline{1-3}%
\cline{5-7}%
6 & \multicolumn{1}{r}{16\ \operatorname{mod}48} &
\multicolumn{1}{r}{8\ \operatorname{mod}288} &  & 12 &
1\ \operatorname{mod}48 & 55\ \operatorname{mod}72\\\cline{1-3}\cline{5-7}
\end{array}}\label{cong}
\end{equation}

\end{corollary}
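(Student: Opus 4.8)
The plan is to read the corollary directly off Lemma~\ref{lemrmm}. The only external input needed is that the reduced minimal model of $E$ is in particular a global minimal model, so that the Weierstrass coefficients $a_4=-A_i/48$ and $a_6=-B_i/1728$ computed in Lemma~\ref{lemrmm} are \emph{integers}. Hence, once $\operatorname{rmm}(E)=R_i$ is fixed, the corollary is equivalent to the two divisibilities
\[
48\mid A_i \qquad\text{and}\qquad 1728\mid B_i,
\]
where $A_i$ and $B_i$ are the expressions in $c_4$ and $c_6$ listed in Table~\ref{ta:RRMmodel}. So after invoking Lemma~\ref{lemrmm}, the entire argument is a translation of these two divisibilities into congruences on $c_4$ and $c_6$.

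The congruence on $c_4$ is immediate. In every row of Table~\ref{ta:RRMmodel} one has $A_i=c_4+\kappa_i$ for an explicit constant $\kappa_i$, so $48\mid A_i$ is exactly $c_4\equiv-\kappa_i\pmod{48}$, which matches the tabulated value (for instance $\kappa_i=0$ for $i=1,2$, $\kappa_i=-16$ for $i=3,\ldots,6$, and so on). First I would simply record these twelve reductions.

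The congruence on $c_6$ needs a reduction step, and this is where the only real care is required. For $i=1,2$ the quantity $B_i$ involves $c_6$ alone, so $1728\mid B_i=2(c_6+\mathrm{const})$ gives $864\mid(c_6+\mathrm{const})$ and hence the stated congruence modulo $864$ directly. For $i=3,\ldots,6$ one has $B_i=2(\pm6c_4+c_6+\mathrm{const})$, so $1728\mid B_i$ yields $864\mid(\pm6c_4+c_6+\mathrm{const})$; reducing this modulo $288$ and substituting the already-established value of $c_4\bmod 48$ isolates $c_6\bmod 288$. The point that legitimizes this is that the coefficient of $c_4$ is $\pm6$ and $6\cdot48=288$, so $\pm6c_4\bmod 288$ depends only on $c_4\bmod 48$; this is precisely why the modulus for $c_6$ drops from $864$ to $288$. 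For $i=7,\ldots,12$ one has $B_i=\lambda_i c_4+2c_6+\mathrm{const}$ with $\lambda_i\in\{3,-9,15\}$; here I would reduce $1728\mid B_i$ modulo $144$, observe that $3\mid\lambda_i$ forces $\lambda_i c_4\bmod 144$ to depend only on $c_4\bmod 48$ (since then $144\mid 48\lambda_i$), substitute the known $c_4\bmod 48$, and solve the resulting congruence $2c_6\equiv r_i\pmod{144}$ for $c_6\bmod 72$. The final division by $2$ is valid because $r_i$ is automatically even, being congruent to $2c_6$ modulo the even number $144$.

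The main, and essentially only, obstacle is bookkeeping: in each of the twelve cases one must check that the chosen target modulus ($864$, $288$, or $72$) is compatible with knowing $c_4$ only modulo $48$, i.e.\ that the coefficient of $c_4$ in $B_i$ is divisible by the appropriate factor. Once this structural observation is isolated, namely $6\cdot48=288$ in the even cases and $3\mid\lambda_i$ in the odd cases, the twelve verifications reduce to routine arithmetic that can be confirmed in a single pass (and, as the authors note for Lemma~\ref{lemrmm}, on SageMath). I do not expect any genuine difficulty beyond this check.
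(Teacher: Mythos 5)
Your proposal is correct and follows essentially the same route as the paper: both deduce $48\mid A_i$ and $1728\mid B_i$ from the integrality of $a_4$ and $a_6$ in Lemma~\ref{lemrmm}, read off $c_4\bmod 48$, and then extract the $c_6$ congruence from the $B_i$ divisibility together with the known residue of $c_4$. The only difference is that you make explicit the structural reason the target modulus for $c_6$ can be taken as $864$, $288$, or $72$ (namely that the coefficient of $c_4$ in $B_i$ times $48$ is divisible by that modulus), whereas the paper delegates this final reduction to a SageMath verification.
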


\begin{proof}
For each $i\in\left\{  1,\ldots,12\right\}  $, let $A_i$ and $B_i$ be as given in
Table \ref{ta:RRMmodel} in terms of $c_{4}$ and $c_{6}$. By Lemma
\ref{lemrmm}, $A_i\equiv0\ \operatorname{mod}48$ and $B_i\equiv
0\ \operatorname{mod}1728$. Solving for $c_{4}$ in $A_i$ modulo $48$ yields the
claimed congruences in (\ref{cong}). Next, solving for $2c_{6}$ in $B_i$ modulo
$1728$ allows us to determine $c_{6}$ modulo $864$ with the established
congruences for~$c_{4}$. It is then verified that the congruences modulo $864$
for~$c_{6}$ reduce to the claimed congruences in (\ref{cong}). This result was
verified on SageMath \cite{sagemath}, and the verification is found in
\cite[Section3.ipynb]{GitHubRRM}.
\end{proof}

With this result, we are now ready to prove Theorem \ref{thmred}:

\begin{proof}
[Proof of Theorem \ref{thmred}.]Let $\operatorname*{sig}_{\text{min}%
}(E)=\left(  c_{4},c_{6},\Delta\right)  $. By Corollary \ref{corcong},
$\operatorname*{rmm}(E)=R_{i}$ for $1\leq i\leq12$ uniquely determines
congruences on $c_{4}$ and $c_{6}$. In particular, we have that the $2$-adic and
$3$-adic valuations of $c_{4}$ and $c_{6}$ are as given below:

\[
{\renewcommand{\arraystretch}{1.2}\renewcommand{\arraycolsep}{.35cm}
\begin{array}
[c]{ccccccc}\cline{1-3}\cline{5-7}
i & \left(  v_{2}(c_{4}),v_{2}(c_{6})\right)   & \left(  v_{3}%
(c_{4}),v_{3}(c_{6})\right)   & \qquad & i & \left(  v_{2}(c_{4}%
),v_{2}(c_{6})\right)   & \left(  v_{3}(c_{4}),v_{3}(c_{6})\right)
\\\cline{1-3}\cline{5-7}%
1 & \left(  \geq4,\geq5\right)   & \left(  \geq1,\geq3\right)   &  & 7
& \left(  0,0\right)   & \left(  0,0\right)  \\\cline{1-3}\cline{5-7}%
2 & \left(  \geq4,3\right)   & \left(  \geq1,\geq3\right)   &  & 8 &
\left(  0,0\right)   & \left(  0,0\right)  \\\cline{1-3}\cline{5-7}%
3 & \left(  \geq4,\geq5\right)   & \left(  0,0\right)   &  & 9 &
\left(  0,0\right)   & \left(  \geq1,\geq2\right)  \\\cline{1-3}%
\cline{5-7}%
4 & \left(  \geq4,3\right)   & \left(  0,0\right)   &  & 10 & \left(
0,0\right)   & \left(  \geq1,\geq2\right)  \\\cline{1-3}\cline{5-7}%
5 & \left(  \geq4,\geq5\right)   & \left(  0,0\right)   &  & 11 &
\left(  0,0\right)   & \left(  0,0\right)  \\\cline{1-3}\cline{5-7}%
6 & \left(  \geq4,3\right)   & \left(  0,0\right)   &  & 12 & \left(
0,0\right)   & \left(  0,0\right)  \\\cline{1-3}\cline{5-7}
\end{array}}
\]
The result now follows from \cite[Tableau II and Tableau IV]{MR1225948}.
\end{proof}

The next result establishes that the reduced minimal model is uniquely determined by a congruence depending on $c_6$ modulo $24$:

\begin{proposition}
\label{mod24}Let $E/%
\mathbb{Q}
$ be an elliptic curve with $\operatorname*{sig}_{\text{min}}(E)=\left(
c_{4},c_{6},\Delta\right)  $. Let $a_{1}=c_{6}\ \operatorname{mod}2\in\left\{
0,1\right\}  $. Then $\operatorname*{rmm}(E)=R_{i}$ if%
\begin{equation}\label{mod24eq}
{\renewcommand{\arraystretch}{1.2}\renewcommand{\arraycolsep}{.3cm}
\begin{array}
[c]{ccccccccccccc}%
i & 1 & 2 & 3 & 4 & 5 & 6 & 7 & 8 & 9 & 10 & 11 &
12\\\hline
2^{a_{1}-1}c_{6}\ \operatorname{mod}24 & \multicolumn{1}{r}{0} &
\multicolumn{1}{r}{12} & 8 & 20 & 16 & 4 & 23 & 11 & 3 & 15
& 19 & 7\\\hline
\end{array}}
\end{equation}
In particular, if $A_{i}$ and $B_{i}$ are as defined in Table~\ref{ta:RRMmodel} , then the reduced minimal model of $E$ is%
\begin{equation}
y^{2}+a_{1}xy+a_{3}y=x^{3}+a_{2}x^{2}-\frac{A_i}
{48}x-\frac{B_i}{1728}.\label{rrmmodels1}%
\end{equation}

\end{proposition}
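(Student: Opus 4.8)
The plan is to derive the claim directly from Corollary~\ref{corcong} by reducing its congruences modulo $24$, and then to upgrade the resulting one-directional implication into the full characterization using the uniqueness of the reduced minimal model. I would begin with the forward direction: assume $\operatorname{rmm}(E)=R_i$. By~\eqref{ta:Ris}, the first coordinate $a_1$ of $R_i$ is $0$ for $1\le i\le 6$ and $1$ for $7\le i\le 12$, while Corollary~\ref{corcong} shows that $c_6$ is even in the former range and odd in the latter. Hence the quantity $a_1=c_6\bmod 2$ defined in the statement agrees with the first Weierstrass coefficient of $R_i$, and $2^{a_1-1}c_6$ equals $c_6/2$ when $1\le i\le 6$ and $c_6$ when $7\le i\le 12$.

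The key numerical observation is that each modulus appearing for $c_6$ in Corollary~\ref{corcong}, namely $864$, $288$, or $72$, is divisible by $48$; such a congruence therefore determines $c_6\bmod 48$, equivalently $c_6/2\bmod 24$, in the even cases, and $c_6\bmod 24$ in the odd cases. Reducing each congruence of Corollary~\ref{corcong} accordingly yields $2^{a_1-1}c_6\equiv v_i\pmod{24}$, where $v_i$ denotes the entry listed in~\eqref{mod24eq}; this is a routine reduction, e.g.\ $c_6\equiv 648\pmod{864}$ gives $c_6/2\equiv 12\pmod{24}$ for $i=2$, and $c_6\equiv 71\pmod{72}$ gives $c_6\equiv 23\pmod{24}$ for $i=7$.

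To obtain the stated implication I would then argue by uniqueness. Every elliptic curve $E/\mathbb{Q}$ has a unique reduced minimal model, so $\operatorname{rmm}(E)=R_j$ for exactly one $j\in\{1,\dots,12\}$, and by the forward direction the invariant $2^{a_1-1}c_6$ is congruent modulo $24$ to $v_j$. One checks that the twelve values $v_1,\dots,v_{12}$ in~\eqref{mod24eq} are pairwise distinct modulo $24$: the six even entries $0,4,8,12,16,20$ and the six odd entries $3,7,11,15,19,23$ are all distinct. Consequently the hypothesis $2^{a_1-1}c_6\equiv v_i\pmod{24}$ forces $v_i\equiv v_j\pmod{24}$, whence $i=j$ and $\operatorname{rmm}(E)=R_i$. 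The final assertion~\eqref{rrmmodels1} is then immediate from Lemma~\ref{lemrmm}, since knowing $\operatorname{rmm}(E)=R_i$ pins down the full reduced minimal model in terms of $A_i$ and $B_i$.

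There is no deep obstacle: the content is a bookkeeping verification that the reductions of Corollary~\ref{corcong} produce precisely the residues in~\eqref{mod24eq}, together with the elementary distinctness check. The one point requiring care is the division by $2$ in the even cases---one must confirm both that $c_6$ is genuinely even whenever $1\le i\le 6$, so that $c_6/2$ is an integer and $a_1=0$, and that the available modulus $864$ or $288$ is divisible by $48$, so that $c_6/2\bmod 24$ is indeed determined by the data of Corollary~\ref{corcong}. As with the preceding results, these reductions can be confirmed via the SageMath verification in~\cite[Section3.ipynb]{GitHubRRM}.
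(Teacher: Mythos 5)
Your proposal is correct and follows essentially the same route as the paper: reduce the congruences of Corollary~\ref{corcong} modulo $24$ (after dividing by $2$ in the even cases) and conclude via Lemma~\ref{lemrmm}, with your explicit appeal to uniqueness of the reduced minimal model and pairwise distinctness of the twelve residues making precise the converse step that the paper leaves implicit. One small slip: $72$ is \emph{not} divisible by $48$, but this is harmless since in the odd cases $i=7,\dots,12$ you only need the modulus to be divisible by $24$, which $72$ is.
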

\begin{proof}
From Corollary \ref{corcong}, we have that $a_{1}=0$ if and only if $2^{a_1-1}c_{6}$ is
even. Moreover, reducing the congruences for $c_{6}$ in (\ref{cong}) modulo $24$ yields the congruences listed in \eqref{mod24eq}. The result now follows by Lemma~\ref{lemrmm}.
\end{proof}

\begin{example}
As a demonstration of Proposition~\ref{mod24}, we consider the elliptic curve $E:y^{2}=x^{3}-11346507x+16371897606$ (\href{http://www.lmfdb.org/EllipticCurve/Q/1830/l/1}{LMFDB label
1830.l1}). By the first part of the Laska-Kraus-Connell Algorithm \cite[Section
3.2]{MR1628193}, we find that
\[
\operatorname*{sig}\nolimits_{\text{min}}(E)=\left(
420241,-303183289,-10245657600000\right)  .
\]
Since $c_{6}\equiv23\ \operatorname{mod}24$, we have by Proposition
\ref{mod24} that $\operatorname*{rmm}(E)=R_{7}$ and the reduced minimal model
of $E$ is given by%
\begin{align*}
y^{2}+xy  & =x^{3}-\frac{c_{4}-1}{48}x-\frac{3c_{4}+2c_{6}-1}{1728}\\
& =x^{3}-8755x+350177.
\end{align*}

\end{example}

\section{Classification of Reduced Minimal Models}\label{section3}

In this section, we obtain Theorem \ref{mainthm} as a consequence of our
explicit classification of the reduced minimal model of $E_{T}$. By Proposition~\ref{mod24}, the computation of the reduced minimal model is reduced to
computing $\operatorname*{sig}_{\text{min}}(E_{T})$ and $\operatorname{rmm}%
(E_{T})$. By \cite[Theorem 4.4]{2020arXiv200101016B}, there are necessary and
sufficient conditions on the parameters of $E_{T}$ to obtain
$\operatorname*{sig}_{\text{min}}(E_{T})=\left(  u_{T}^{-4}\alpha_{T}%
,u_{T}^{-6}\beta_{T},u_{T}^{-12}\gamma_{T}\right)  $. Theorem \ref{thm1} gives
necessary and sufficient conditions on the parameters of $E_{T}$ to determine
$\operatorname{rmm}\!\left(  E_{T}\right)  $:

\begin{theorem}\label{thm1}
Let $E_{T}$ be as given in Table \ref{ta:ETmodel}. Suppose that the parameters
of $E_{T}$ satisfy the conclusion of Proposition \ref{rationalmodels}, and let
$a=c^{2}d$ for $d$ a positive squarefree integer if $T=C_{4}$. Then there are
necessary and sufficient conditions on the parameters of $E_{T}$ to determine
the reduced minimal model of $E_{T}$. Table \ref{ta:rrm} summarizes these
necessary and sufficient conditions.
\end{theorem}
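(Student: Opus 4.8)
The plan is to reduce the entire classification to a finite congruence computation via Proposition~\ref{mod24}. By that proposition, $\operatorname{rmm}(E_T)$ is determined entirely by the residue $2^{a_1-1}c_6 \bmod 24$, where $(c_4,c_6,\Delta)=\operatorname{sig}_{\text{min}}(E_T)$ and $a_1=c_6\bmod 2$. So the first step is to make $c_6$ explicit: for each torsion type $T$, I would invoke the formula $\operatorname{sig}_{\text{min}}(E_T)=(u_T^{-4}\alpha_T, u_T^{-6}\beta_T, u_T^{-12}\gamma_T)$ from \cite[Theorem 4.4]{2020arXiv200101016B}, together with the necessary-and-sufficient conditions on the parameters that fix the scaling factor $u_T$. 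This converts the problem into one of computing $\beta_T(a,b)$ (or $\beta_T(a,b,d)$ when $T=C_2,C_2\times C_2$), dividing by the appropriate power of $u_T$, and reading off the result modulo $24$.

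The heart of the argument is that $2^{a_1-1}c_6 \bmod 24$ depends only on the residues of the parameters $a,b$ (and $d$ where relevant) modulo a fixed modulus. Since $24 = 8\cdot 3$, and each $\beta_T$ is a fixed polynomial in $a,b,d$ with integer coefficients, the residue of $\beta_T$ modulo $24$ is a polynomial function of $(a\bmod 24, b\bmod 24, d\bmod 24)$; after accounting for the unit scaling by $u_T^{-6}$ (and the factor $2^{a_1-1}$, which only rescales by a $2$-adic unit once $a_1$ is known), the final residue modulo $24$ likewise depends only on the parameters modulo a fixed integer. Thus for each $T$ I would enumerate all admissible residue classes of the parameters — those compatible with the coprimality, sign, squarefreeness, and $u_T$-conditions imposed by Proposition~\ref{rationalmodels} and \cite[Theorem 4.4]{2020arXiv200101016B} — and for each class record which $R_i$ arises. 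This is precisely a finite check, which is what the GitHub code \cite{GitHubRRM} carries out by exhausting all parameter congruences modulo $24$; I would present one representative case (say $T=C_7$, where $u_T=1$ so no scaling subtlety intervenes) fully by hand to illustrate the mechanism, and defer the remaining cases to the verified computation.

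The main obstacle is the interaction between the scaling factor $u_T$ and the reduction modulo $24$. For the types where $u_T$ is not identically $1$ — namely $C_2,C_3,C_4,C_6,C_8,C_{10},C_{12}$ and the $C_2\times C_{2m}$ families — the value of $u_T\in\{1,2,4,16,64,\dots\}$ is itself governed by congruence and valuation conditions on the parameters, so one cannot read off $u_T^{-6}\beta_T \bmod 24$ without first partitioning the parameter space according to which $u_T$ occurs. Because $u_T$ carries powers of $2$ and (for $C_3,C_4$) factors $c,d$, dividing $\beta_T$ by $u_T^6$ can change the $2$-adic and $3$-adic behavior, so the congruence modulus I track for the parameters must be refined enough (a suitable multiple of $24$, large enough in its $2$- and $3$-parts) to see both the $u_T$-selection and the final residue simultaneously. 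The delicate bookkeeping is ensuring these refined congruence classes are mutually consistent with the squarefreeness and coprimality hypotheses, so that no spurious class is counted and every admissible curve is covered; once this bookkeeping is set up correctly, each case collapses to a deterministic table lookup, yielding the necessary and sufficient conditions summarized in Table~\ref{ta:rrm}.
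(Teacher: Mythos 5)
Your proposal is correct and follows essentially the same route as the paper: reduce via Proposition~\ref{mod24} to computing $u_T^{-6}\beta_T$ (or its half) modulo $24$, exhaust admissible congruence classes of the parameters (refining the modulus where $u_T>1$ forces division by powers of $2$), present one case by hand, and defer the rest to the verified computation in \cite{GitHubRRM}. The only cosmetic difference is your choice of $T=C_7$ as the worked example where the paper uses $T=C_2\times C_2$ precisely to exhibit the $u_T=2$ subtlety you correctly flag.
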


{\begingroup  \footnotesize 
\renewcommand{\arraystretch}{1.3}
 \begin{longtable}{ccccc}
 	\caption{The reduced minimal model of $E_T$}\\
	\hline
$T$ & $\operatorname{rmm}(E_{T})$ & \multicolumn{3}{c}{Conditions on parameters} \\
	\hline
	\endfirsthead
	\hline
$T$ & $\operatorname{rmm}(E_{T})$ & \multicolumn{3}{c}{Conditions on parameters} \\
	\hline
	\endhead
	\hline 
	\multicolumn{5}{r}{\emph{continued on next page}}
	\endfoot
	\hline 
	\endlastfoot
$C_{2}$ & $R_{1}$ & $a\equiv0\ \operatorname{mod}3$ & $v_{2}\!\left(
b\right)  \leq2$ or $a\not \equiv 3\ \operatorname{mod}4$ & $v_{2}(b^{2}d-a^{2})\leq3$ or $v_{2}(  a)  \neq1$\\\cmidrule(lr){3-5}
&  & $a\equiv0\ \operatorname{mod}6$ & $b\equiv2\ \operatorname{mod}4$ &
$v_{2}(b^{2}d-a^{2})\leq7$ or $a\not \equiv 2\ \operatorname{mod}8$\\\cmidrule(lr){2-5}
& $R_{3}$ & $a\equiv1\ \operatorname{mod}3$ & $v_{2}\!\left(  b\right)  \leq2$
or $a\not \equiv 3\ \operatorname{mod}4$ & $v_{2}(b^{2}d-a^{2})\leq3$ or $v_{2}(  a)  \neq1$\\\cmidrule(lr){3-5}
&  & $a\equiv4\ \operatorname{mod}6$ & $b\equiv2\ \operatorname{mod}4$ &
$v_{2}(b^{2}d-a^{2})\leq7$ or $a\not \equiv 2\ \operatorname{mod}8$\\\cmidrule(lr){2-5}
& $R_{5}$ & $a\equiv2\ \operatorname{mod}3$ & $v_{2}\!\left(  b\right)  \leq2$
or $a\not \equiv 3\ \operatorname{mod}4$ & $v_{2}(b^{2}d-a^{2})\leq3$ or $v_{2}(  a)  \neq1$\\\cmidrule(lr){3-5}
&  & $a\equiv2\ \operatorname{mod}6$ & $b\equiv2\ \operatorname{mod}4$ &
$v_{2}(b^{2}d-a^{2})\leq7$ or $a\not \equiv 2\ \operatorname{mod}8$\\\cmidrule(lr){2-5}
& $R_{7}$ & $a\equiv2\ \operatorname{mod}48$ & $b\equiv2\ \operatorname{mod}4$
& $v_{2}(b^{2}d-a^{2})\geq8$\\\cmidrule(lr){3-5}
&  & $a\equiv23\ \operatorname{mod}24$ & $b\equiv0\ \operatorname{mod}8$ & \\\cmidrule(lr){2-5}
& $R_{8}$ & $a\equiv26\ \operatorname{mod}48$ & $b\equiv2\ \operatorname{mod}%
4$ & $v_{2}(b^{2}d-a^{2})\geq8$\\\cmidrule(lr){3-5}
&  & $a\equiv11\ \operatorname{mod}24$ & $b\equiv0\ \operatorname{mod}8$ & \\\cmidrule(lr){2-5}
& $R_{9}$ & $a\equiv42\ \operatorname{mod}48$ & $b\equiv2\ \operatorname{mod}%
4$ & $v_{2}(b^{2}d-a^{2})\geq8$\\\cmidrule(lr){3-5}
&  & $a\equiv3\ \operatorname{mod}24$ & $b\equiv0\ \operatorname{mod}8$ & \\\cmidrule(lr){2-5}
& $R_{10}$ & $a\equiv18\ \operatorname{mod}48$ & $b\equiv2\ \operatorname{mod}%
4$ & $v_{2}(b^{2}d-a^{2})\geq8$\\\cmidrule(lr){3-5}
&  & $a\equiv15\ \operatorname{mod}24$ & $b\equiv0\ \operatorname{mod}8$ & \\\cmidrule(lr){2-5}
& $R_{11}$ & $a\equiv10\ \operatorname{mod}48$ & $b\equiv2\ \operatorname{mod}%
4$ & $v_{2}(b^{2}d-a^{2})\geq8$\\\cmidrule(lr){3-5}
&  & $a\equiv19\ \operatorname{mod}24$ & $b\equiv0\ \operatorname{mod}8$ & \\\cmidrule(lr){2-5}
& $R_{12}$ & $a\equiv34\ \operatorname{mod}48$ & $b\equiv2\ \operatorname{mod}%
4$ & $v_{2}(b^{2}d-a^{2})\geq8$\\\cmidrule(lr){3-5}
&  & $a\equiv7\ \operatorname{mod}24$ & $b\equiv0\ \operatorname{mod}8$ & \\\hline
$C_{3}$ & $R_{1}$ & $a\equiv0\ \operatorname{mod}6$ & $v_{2}\!\left(
a\right)  \not \equiv 0\ \operatorname{mod}3$ & \\\cmidrule(lr){2-5}
& $R_{2}$ & $a\equiv0\ \operatorname{mod}6$ & $v_{2}\!\left(  a\right)
\equiv0\ \operatorname{mod}3$ & \\\cmidrule(lr){2-5}
& $R_{5}$ & $a\equiv\pm2\ \operatorname{mod}6$ & $v_{2}\!\left(  a\right)
\not \equiv 0\ \operatorname{mod}3$ & \\\cmidrule(lr){2-5}
& $R_{6}$ & $a\equiv\pm2\ \operatorname{mod}6$ & $v_{2}\!\left(  a\right)
\equiv0\ \operatorname{mod}3$ & \\\cmidrule(lr){2-5}
& $R_{7}$ & $a\equiv\pm1\ \operatorname{mod}6$ & $b$ is even & \\\cmidrule(lr){2-5}
& $R_{8}$ & $a\equiv\pm1\ \operatorname{mod}6$ & $b$ is odd & \\\cmidrule(lr){2-5}
& $R_{9}$ & $a\equiv3\ \operatorname{mod}6$ & $b$ is odd & \\\cmidrule(lr){2-5}
& $R_{10}$ & $a\equiv3\ \operatorname{mod}6$ & $b$ is even & \\\hline
$C_{3}^{0}$ & $R_{1}$ & $a$ is even &  & \\\cmidrule(lr){2-5}
& $R_{2}$ & $a$ is odd &  & \\\hline
$C_{4}$ & $R_{1}$ & $v_{2}(a)\leq7$ or $bd\not \equiv 3\ \operatorname{mod}4$
& $a$ is even & $ab(a+b)\not \equiv 0\ \operatorname{mod}3$ or $v_{3}(a)$ is
odd\\\cmidrule(lr){2-5}
& $R_{3}$ & $v_{2}(a)\leq7$ or $bd\not \equiv 3\ \operatorname{mod}4$ & $a$ is
even & $a+b\equiv0\ \operatorname{mod}3$\\\cmidrule(lr){5-5}
&  &  &  & $v_{3}(a)>0$ is even and $bd\equiv1,4\ \operatorname{mod}6$\\\cmidrule(lr){2-5}
& $R_{5}$ & $v_{2}(a)\leq7$ or $bd\not \equiv 3\ \operatorname{mod}4$ & $a$ is
even & $b\equiv0\ \operatorname{mod}3$\\\cmidrule(lr){5-5}
&  &  &  & $v_{3}(a)>0$ is even and $bd\equiv2,5\ \operatorname{mod}6$\\\cmidrule(lr){2-5}
& $R_{7}$ & $v_{2}(a)\leq7$ or $bd\not \equiv 3\ \operatorname{mod}4$ & $a$ is
odd & $b\equiv0\ \operatorname{mod}3$\\\cmidrule(lr){5-5}
&  &  &  & $v_{3}(a)>0$ is even and $bd\equiv2,5\ \operatorname{mod}6$\\\cmidrule(lr){3-5}
&  & $v_{2}(a)\geq8$ is even & $bd\equiv7,15\ \operatorname{mod}16$ &
$b\equiv0\ \operatorname{mod}3$\\\cmidrule(lr){5-5}
&  &  &  & $v_{3}(a)>0$ is even and $bd\equiv11\ \operatorname{mod}12$\\\cmidrule(lr){2-5}
& $R_{8}$ & $v_{2}(a)\geq8$ is even & $bd\equiv3,11\ \operatorname{mod}16$ &
$b\equiv0\ \operatorname{mod}3$\\\cmidrule(lr){5-5}
&  &  &  & $v_{3}(a)>0$ is even and $bd\equiv11\ \operatorname{mod}12$\\\cmidrule(lr){2-5}
& $R_{9}$ & $v_{2}(a)\geq8$ is even & $bd\equiv3,11\ \operatorname{mod}16$ &
$ab(a+b)\not \equiv 0\ \operatorname{mod}3$ or $v_{3}(a)$ is odd\\\cmidrule(lr){2-5}
& $R_{10}$ & $v_{2}(a)\leq7$ or $bd\not \equiv 3\ \operatorname{mod}4$ & $a$
is odd & $ab(a+b)\not \equiv 0\ \operatorname{mod}3$ or $v_{3}(a)$ is odd\\\cmidrule(lr){3-5}
&  & $v_{2}(a)\geq8$ is even & $bd\equiv7,15\ \operatorname{mod}16$ &
$ab(a+b)\not \equiv 0\ \operatorname{mod}3$ or $v_{3}(a)$ is odd\\\cmidrule(lr){2-5}
& $R_{11}$ & $v_{2}(a)\geq8$ is even & $bd\equiv3,11\ \operatorname{mod}16$ &
$a+b\equiv0\ \operatorname{mod}3$\\\cmidrule(lr){5-5}
&  &  &  & $v_{3}(a)>0$ is even and $bd\equiv7\ \operatorname{mod}12$\\\cmidrule(lr){2-5}
& $R_{12}$ & $v_{2}(a)\leq7$ or $bd\not \equiv 3\ \operatorname{mod}4$ & $a$
is odd & $a+b\equiv0\ \operatorname{mod}3$\\\cmidrule(lr){5-5}
&  &  &  & $v_{3}(a)>0$ is even and $bd\equiv1,4\ \operatorname{mod}6$\\\cmidrule(lr){3-5}
&  & $v_{2}(a)\geq8$ is even & $bd\equiv7,15\ \operatorname{mod}16$ &
$a+b\equiv0\ \operatorname{mod}3$\\\cmidrule(lr){5-5}
&  &  &  & $a\equiv0\ \operatorname{mod}3$ and $bd\equiv7\ \operatorname{mod}%
12$\\\hline
$C_{5}$ & $R_{4}$ & $ab\equiv\pm1\ \operatorname{mod}6$ &  & \\\cmidrule(lr){2-5}
& $R_{6}$ & $ab\equiv3\ \operatorname{mod}6$ &  & \\\cmidrule(lr){2-5}
& $R_{7}$ & $ab\equiv0\ \operatorname{mod}6$ &  & \\\cmidrule(lr){2-5}
& $R_{12}$ & $ab\equiv\pm2\ \operatorname{mod}6$ &  & \\\hline
$C_{6}$ & $R_{1}$ & $a\equiv3\ \operatorname{mod}6$ & $v_{2}(a+b)=1,2$ & \\\cmidrule(lr){2-5}
& $R_{5}$ & $a\equiv\pm1\ \operatorname{mod}6$ & $v_{2}(a+b)=1,2$ & \\\cmidrule(lr){2-5}
& $R_{7}$ & $a\equiv\pm1\ \operatorname{mod}6$ & $v_{2}(a+b)\neq1,2,3$ & \\\cmidrule(lr){2-5}
& $R_{8}$ & $a\equiv\pm1\ \operatorname{mod}6$ & $v_{2}(a+b)=3$ & \\\cmidrule(lr){3-5}
&  & $a\equiv\pm2\ \operatorname{mod}6$ &  & \\\cmidrule(lr){2-5}
& $R_{9}$ & $a\equiv3\ \operatorname{mod}6$ & $v_{2}(a+b)=3$ & \\\cmidrule(lr){3-5}
&  & $a\equiv0\ \operatorname{mod}6$ &  & \\\cmidrule(lr){2-5}
& $R_{10}$ & $a\equiv3\ \operatorname{mod}6$ & $v_{2}(a+b)\neq1,2,3$ & \\\hline
$C_{7}$ & $R_{7}$ & $a+b\equiv\pm1\ \operatorname{mod}3$ &  & \\\cmidrule(lr){2-5}
& $R_{10}$ & $a+b\equiv0\ \operatorname{mod}3$ &  & \\\hline
$C_{8}$ & $R_{3}$ & $a\equiv0\ \operatorname{mod}12$ &  & \\\cmidrule(lr){2-5}
& $R_{5}$ & $a\equiv\pm4\ \operatorname{mod}12$ &  & \\\cmidrule(lr){2-5}
& $R_{7}$ & $a\equiv\pm1,\pm2,\pm5\ \operatorname{mod}12$ &  & \\\cmidrule(lr){2-5}
& $R_{12}$ & $a\equiv\pm3,6\ \operatorname{mod}12$ &  & \\\hline
$C_{9}$ & $R_{7}$ & $a+b\equiv\pm1\ \operatorname{mod}3$ &  & \\\cmidrule(lr){2-5}
& $R_{10}$ & $a+b\equiv0\ \operatorname{mod}3$ &  & \\\hline
$C_{10}$ & $R_{7}$ & $v_{2}(a)\geq0$ &  & \\\hline
$C_{12}$ & $R_{7}$ & $a\equiv\pm1,\pm2,\pm5\ \operatorname{mod}12$ &  & \\\cmidrule(lr){2-5}
& $R_{8}$ & $a\equiv\pm4\ \operatorname{mod}12$ &  & \\\cmidrule(lr){2-5}
& $R_{9}$ & $a\equiv0\ \operatorname{mod}12$ &  & \\\cmidrule(lr){2-5}
& $R_{10}$ & $a\equiv\pm3,6\ \operatorname{mod}12$ &  & \\\hline
\multicolumn{2}{l}{$C_{2}\times C_{2}\hspace{.5em}   R_{1}$} & 
$v_{2}(a)\leq3$ or $bd\not \equiv 1\ \operatorname{mod}4$
& $d(a+b)\equiv
0\ \operatorname{mod}3$\\\cmidrule(lr){2-5}
& $R_{3}$ & 
$v_{2}(a)\leq3$ or $bd\not \equiv
1\ \operatorname{mod}4$
& $d(a+b)\equiv2\ \operatorname{mod}3$\\\cmidrule(lr){2-5}
& $R_{5}$ & 
$v_{2}(a)\leq3$ or $bd\not \equiv
1\ \operatorname{mod}4$
& $d(a+b)\equiv1\ \operatorname{mod}3$\\\cmidrule(lr){2-5}
& $R_{7}$ & $v_{2}(a)\geq4$ & $bd\equiv1\ \operatorname{mod}4$ &
$d(a+b)\equiv1\ \operatorname{mod}24$\\\cmidrule(lr){2-5}
& $R_{8}$ & $v_{2}(a)\geq4$ & $bd\equiv1\ \operatorname{mod}4$ &
$d(a+b)\equiv13\ \operatorname{mod}24$\\\cmidrule(lr){2-5}
& $R_{9}$ & $v_{2}(a)\geq4$ & $bd\equiv1\ \operatorname{mod}4$ &
$d(a+b)\equiv21\ \operatorname{mod}24$\\\cmidrule(lr){2-5}
& $R_{10}$ & $v_{2}(a)\geq4$ & $bd\equiv1\ \operatorname{mod}4$ &
$d(a+b)\equiv9\ \operatorname{mod}24$\\\cmidrule(lr){2-5}
& $R_{11}$ & $v_{2}(a)\geq4$ & $bd\equiv1\ \operatorname{mod}4$ &
$d(a+b)\equiv5\ \operatorname{mod}24$\\\cmidrule(lr){2-5}
& $R_{12}$ & $v_{2}(a)\geq4$ & $bd\equiv1\ \operatorname{mod}4$ &
$d(a+b)\equiv17\ \operatorname{mod}24$\\\hline
\multicolumn{2}{l}{$C_{2}\times C_{4}\hspace{.5em} R_{3}$} & $a\equiv
6\ \operatorname{mod}12$ & \multicolumn{2}{l}{$ab\equiv6\ \operatorname{mod}%
12$}\\\cmidrule(lr){3-5}
&  & $a\equiv0\ \operatorname{mod}12$ & \multicolumn{2}{l}{$ab\equiv
0,24,36\ \operatorname{mod}48$}\\\cmidrule(lr){3-5}
&  & $a\equiv\pm2\ \operatorname{mod}12$ & \multicolumn{2}{l}{$ab\equiv
10\ \operatorname{mod}12$}\\\cmidrule(lr){3-5}
&  & $a\equiv\pm4\ \operatorname{mod}12$ & \multicolumn{2}{l}{$ab\equiv
4,16,40\ \operatorname{mod}48$}\\\cmidrule(lr){2-5}
& $R_{5}$ & $a\equiv\pm2\ \operatorname{mod}12$ & \multicolumn{2}{l}{$ab\equiv
2,6\ \operatorname{mod}12$}\\\cmidrule(lr){3-5}
&  & $a\equiv\pm4\ \operatorname{mod}12$ & \multicolumn{2}{l}{$ab\equiv
0,8,20,24,32,36\ \operatorname{mod}48$}\\\cmidrule(lr){2-5}
& $R_{7}$ & $a\equiv\pm1\ \operatorname{mod}6$ & \multicolumn{2}{l}{$ab\equiv
0,2\ \operatorname{mod}3$}\\\cmidrule(lr){3-5}
&  & $a\equiv\pm4\ \operatorname{mod}12$ & \multicolumn{2}{l}{$ab\equiv
12,44\ \operatorname{mod}48$}\\\cmidrule(lr){2-5}
& $R_{12}$ & $a\equiv\pm1\ \operatorname{mod}6$ & \multicolumn{2}{l}{$ab\equiv
1\ \operatorname{mod}3$}\\\cmidrule(lr){3-5}
&  & $a\equiv3\ \operatorname{mod}6$ & \multicolumn{2}{l}{$ab\equiv
0\ \operatorname{mod}3$}\\\cmidrule(lr){3-5}
&  & $a\equiv\pm4\ \operatorname{mod}12$ & \multicolumn{2}{l}{$ab\equiv
28\ \operatorname{mod}48$}\\\cmidrule(lr){3-5}
&  & $a\equiv0\ \operatorname{mod}12$ & \multicolumn{2}{l}{$ab\equiv
12\ \operatorname{mod}48$}\\\hline
\multicolumn{2}{l}{$C_{2}\times C_{6}\hspace{.5em} R_{7}$} & $a+b$ is odd &
\multicolumn{2}{l}{$b\not \equiv 0\ \operatorname{mod}3$}\\\cmidrule(lr){3-5}
&  & $a+b$ is even & \multicolumn{2}{l}{$a(a+b)\equiv
2,6,18,38\ \operatorname{mod}48$}\\\cmidrule(lr){2-5}
& $R_{8}$ & $a+b$ is even & \multicolumn{2}{l}{$a(a+b)\equiv
0,8,12,14,20,24,26,30,32,36,42,44\ \operatorname{mod}48$}\\\cmidrule(lr){2-5}
& $R_{9}$ & $a+b$ is even & \multicolumn{2}{l}{$a(a+b)\equiv
4,10,16,28,40,46\ \operatorname{mod}48$}\\\cmidrule(lr){2-5}
& $R_{10}$ & $a+b$ is odd & \multicolumn{2}{l}{$b\equiv0\ \operatorname{mod}%
3$}\\\cmidrule(lr){3-5}
&  & $a+b$ is even & \multicolumn{2}{l}{$a(a+b)\equiv22,34\ \operatorname{mod}%
48$}\\\hline
\multicolumn{2}{l}{$C_{2}\times C_{8}\hspace{.5em} R_{7}$} & $v_{2}(a)\geq0$ &  &
\label{ta:rrm}
\end{longtable}
\endgroup}

\begin{proof}
The proof of this result is done by considering each $E_{T}$ separately. We
observe that for each~$T$, the given conditions on the parameters in Table
\ref{ta:rrm}  to obtain $R_{i}$ partition the integers $a,b,d$ that satisfy
the assumptions in the conclusion to Proposition \ref{rationalmodels}. For
each $T$, we also have necessary and sufficient conditions on the parameters
of $E_{T}$ to obtain $\operatorname*{sig}_{\text{min}}(E_{T})=\left(
u_{T}^{-4}\alpha_{T},u_{T}^{-6}\beta_{T},u_{T}^{-12}\gamma_{T}\right)  $. By
Proposition \ref{mod24} it suffices to compute $\operatorname*{rmm}(E_{T})$ by
considering $u_{T}^{-6}\beta_{T}$ or $u_{T}^{-6}\beta_{T}/2$ modulo $24$. In particular, it suffices to
exhaust all possible congruence classes on the parameters of $E_{T}$ modulo~$24$ to deduce $\operatorname*{rmm}(E_{T})$. Since the method of proof is the
same in each case, we only provide a proof for the $T=C_{2}\times C_{2}$ case
in this article. The proof has been automated for all the cases, and its
verification is found in \cite[Section4.ipynb]{GitHubRRM}.

Suppose $T=C_{2}\times C_{2}$ and that the parameters of $E_{T}$ satisfy the
following conditions: $a,b,d$ are integers with $a$ even, $\gcd\left(
a,b\right)  =1$, and $d>0$ is squarefree. By \cite[Theorem 4.4]%
{2020arXiv200101016B}, $\operatorname*{sig}_{\text{min}}(E_{T})=\left(
c_{4},c_{6},\Delta\right)  =\left(  u_{T}^{-4}\alpha_{T},u_{T}^{-6}\beta
_{T},u_{T}^{-12}\gamma_{T}\right)  $ where%
\[
u_{T}=\left\{
\begin{array}
[c]{cl}%
1 & \text{if }v_{2}(a)\leq3\text{ or }bd\not \equiv 1\ \operatorname{mod}4,\\
2 & \text{if }v_{2}(a)\geq4\text{ and }bd\equiv1\ \operatorname{mod}4.
\end{array}
\right.
\]
In particular,%
\begin{equation}
c_{6}=\left\{
\begin{array}
[c]{cl}%
-32d^{3}\left(  2a-b\right)  \left(  a+b\right)  \left(  a-2b\right)   &
\text{if }u_{T}=1\\
-d^{3}\left(  2a-b\right)  \left(  a+b\right)  \left(  \frac{a}{2}-b\right)
& \text{if }u_{T}=2.
\end{array}
\right.  \label{C2C2pf}%
\end{equation}
This is verified in \cite[detailedC2C2.ipynb]{GitHubRRM}, and the
statements below are also verified in that file.

\textbf{Case 1.} Let $v_{2}(a)\leq3$ or $bd\not \equiv 1\ \operatorname{mod}%
4$. Then $c_6$ is even and the
claim is verified in this case by Proposition~\ref{mod24}, since
\[
\frac{c_{6}}{2}\equiv16d^{3}\left(  a+b\right)  ^{3}\ \operatorname{mod}%
24=\left\{
\begin{array}
[c]{rl}%
0\ \operatorname{mod}24 & \text{if }d\left(  a+b\right)  \equiv
0\ \operatorname{mod}3,\\
8\ \operatorname{mod}24 & \text{if }d\left(  a+b\right)  \equiv
2\ \operatorname{mod}3,\\
16\ \operatorname{mod}24 & \text{if }d\left(  a+b\right)  \equiv
1\ \operatorname{mod}3.
\end{array}
\right.
\]

\textbf{Case 2. }Let $v_{2}\!\left(  a\right)  \geq4$ and $bd\equiv
1\ \operatorname{mod}4$. Then $c_6$ is odd and the result now follows for $T=C_{2}\times C_{2}$ by
Proposition~\ref{mod24} since%
\[
c_{6}\equiv-d^{3}\left(  a+b\right)  ^{3}\ \operatorname{mod}24=\left\{
\begin{array}
[c]{rl}%
23\ \operatorname{mod}24 & \text{if }d\left(  a+b\right)  \equiv
1\ \operatorname{mod}24,\\
11\ \operatorname{mod}24 & \text{if }d\left(  a+b\right)  \equiv
13\ \operatorname{mod}24,\\
3\ \operatorname{mod}24 & \text{if }d\left(  a+b\right)  \equiv
21\ \operatorname{mod}24,\\
15\ \operatorname{mod}24 & \text{if }d\left(  a+b\right)  \equiv
9\ \operatorname{mod}24,\\
19\ \operatorname{mod}24 & \text{if }d\left(  a+b\right)  \equiv
5\ \operatorname{mod}24,\\
7\ \operatorname{mod}24 & \text{if }d\left(  a+b\right)  \equiv
17\ \operatorname{mod}24.
\end{array}
\right.
\]

As noted, the remaining cases are verified in \cite[Section4.ipynb]%
{GitHubRRM}. While it suffices to exhaust all congruence classes on the
parameters modulo $24$, special care must be taken for those $T$ where conditions on the parameters leads to
$u_{T}>1$. Indeed, in the proof above, we observe that when $u_{T}=2$, we have
an $\frac{a}{2}$ appearing in the expression of $c_{6}$. The assumptions that
$v_{2}(a)\geq4$ yields that the possible values of $a$ modulo $24$ are
$0,8,16$. Reducing $\frac{a}{2}$ modulo $24$ results in the same congruences
classes. However, if instead the assumption had been $v_{2}(a)=1$, we would
have needed to consider $a$ modulo $48$ to ensure that we do exhaust all possible
congruence classes for $\frac{a}{2} \ \operatorname{mod} 24$. Our code takes this into account for the remaining $T$'s
where this occurs.
\end{proof}

By Corollary~\ref{coraddred}, an elliptic curve $E/%
\mathbb{Q}
$ has additive reduction at $p=2$ if and only if $\operatorname*{rmm}%
(E)=R_{i}$, where $i=1,3,5$. In particular, the cases corresponding to
$\operatorname*{rmm}(E_{T})=R_{i}$ for $i=1,3,5$ are precisely the cases for
which $E_{T}$ has additive reduction at $2$. In \cite{BarRoy}, necessary and
sufficient conditions on the parameters of $E_{T}$ were given to deduce the
local data of $E_{T}$ at primes for which $E_{T}$ has additive reduction. A
comparison of loc. cit. with Theorem \ref{thm1} shows that
$\operatorname*{rmm}(E)$ does not encode any further information about the
local data at $p=2$.

Next, we use Theorem~\ref{thm1} and Proposition~\ref{mod24} to compute the reduced minimal models of the elliptic curves appearing in Examples~8.5 and 8.6 of \cite{2020arXiv200101016B}.

\begin{example}
\label{example1}The elliptic curve%
\[
E:y^{2}=x^{3}-1900650154752x+990015042347311104
\]
is $%
\mathbb{Q}
$-isomorphic to $E_{C_{4}}\!\left(  a,b\right)  $ where $\left(  a,b\right)
=\left(  2^{12}\cdot3^{2},5\cdot7\cdot131\right)  $. In particular, $d=1$ in
the notation of (\ref{Cdef}). It follows from Theorem \ref{thm1} that
$\operatorname*{rmm}(E)=R_{3}$ since $v_{3}(a)=2$ and $bd\equiv
1\ \operatorname{mod}6$. By Proposition \ref{mod24}, the reduced minimal model
of $E$ is%
\begin{align*}
y^{2}  & =x^{3}-x^{2}-\frac{c_{4}-16}{48}-\frac{2(c_{6}-6c_{4}+32)}{1728}\\
& =x^{3}-x^{2}-91659440x+331584587712.
\end{align*}
For the last step, we have that the invariants $c_{4}$ and $c_{6}$ associated
to a global minimal model of $E$ are $c_{4}=4399653136$ and $c_{6}%
=-286462685864384$.
\end{example}

\begin{example}
\label{example2}The elliptic curve%
\[
E:y^{2}=x^{3}-19057987954261048752x+31955359661403338940204703104
\]
is $%
\mathbb{Q}
$-isomorphic to $E_{C_{12}}\!\left(  6,11\right)  $. From Theorem \ref{thm1}
we deduce that $\operatorname*{rmm}(E)=R_{10}$. The reduced minimal model is
then obtained from Proposition \ref{mod24}:%
\begin{align*}
y^{2}+xy+y  & =x^{3}-x^{2}-\frac{c_{4}+15}{48}x-\frac{2c_{6}-9c_{4}+459}%
{1728}\\
& =x^{3}-x^{2}-919077351189287x+10701785524467279561311.
\end{align*}
We note that $c_{4}$ and $c_{6}$ are $44115712857085761$ and
$-9246342494619021684087009$, respectively.
\end{example}

We conclude by considering the Cremona database \cite{cremonadata}, which currently consists of all elliptic
curves $E/\mathbb{Q}$ whose conductor is at most $500\ 000$. This amounts to a total of $3\ 064\ 705$ elliptic curves. Below, we give the number $n_{T}$ of elliptic curves in the Cremona database with torsion subgroup $T$:
\[
{\renewcommand{\arraystretch}{1.2}\renewcommand{\arraycolsep}{.25cm}
\begin{array}
[c]{cccccccccccccc}
T & n_{T} &  & T & n_{T} &  & T & n_{T} &  & T & n_{T} &  &
T & n_{T}\\\cline{1-2}\cline{4-5}\cline{7-8}\cline{10-11}\cline{13-14}%
C_{1} & 1683021 &  & C_{4} & 33558 &  & C_{7} & 80 &  & C_{10} &
42 &  & C_{2}\times C_{4} & 1737\\\cline{1-2}\cline{4-5}\cline{7-8}%
\cline{10-11}\cline{13-14}%
C_{2} & 1186350 &  & C_{5} & 1503 &  & C_{8} & 178 &  & C_{12} &
17 &  & C_{2}\times C_{6} & 96\\\cline{1-2}\cline{4-5}\cline{7-8}%
\cline{10-11}\cline{13-14}%
C_{3} & 51405 &  & C_{6} & 6759 &  & C_{9} & 20 &  & C_{2}\times
C_{2} & 99933 &  & C_{2}\times C_{8} & 6\\\cline{1-2}\cline{4-5}%
\cline{7-8}\cline{10-11}\cline{13-14}
\end{array}}
\]
Table \ref{dist} gives the distribution of $\operatorname*{rmm}(E)$ among the
$n_{T}$ elliptic curves with specified torsion subgroup $T$ in the Cremona database. The code used to compute the data in the table is found in \cite[Cremonadatabase.ipynb]{GitHubRRM}.

\vspace{1em}

{\begingroup \footnotesize
\renewcommand{\arraystretch}{1.15}
\captionof{table}{Distribution of $\operatorname{rmm}(E)$ for elliptic curves $E$ with $E(\Q)_{\text{tors}}\cong T$ and conductor $<500\ 000$}
\begin{center}
\begin{tabular}
[c]{ccccccccccccc}
\diagbox[width=4em]{\vspace{-0.3em}$T$}{\vspace{-0.8em}$R_i$} & \vspace{-1.3em} $R_{1}$ & $R_{2}$ & $R_{3}$ & $R_{4}$ & $R_{5}$ & $R_{6}$ & $R_{7}$ &
$R_{8}$ & $R_{9}$ & $R_{10}$ & $R_{11}$ & $R_{12}$\\\\\hline
$C_{1}$ & $17.0\%$ & $5.54\%$ & $11.7\%$ & $3.63\%$ & $11.3\%$ & $3.73\%$ &
$6.85\%$ & $6.71\%$ & $10.1\%$ & $10.1\%$ & $6.67\%$ & $6.72\%$\\\hline
$C_{2}$ & $18.5\%$ & $0\%$ & $14.4\%$ & $0\%$ & $14.3\%$ & $0\%$ & $7.84\%$ &
$8.10\%$ & $10.5\%$ & $10.2\%$ & $8.11\%$ & $7.97\%$\\\hline
$C_{3}$ & $7.52\%$ & $7.67\%$ & $0\%$ & $0\%$ & $8.79\%$ & $9.29\%$ & $16.9\%$
& $19.7\%$ & $14.4\%$ & $15.7\%$ & $0\%$ & $0\%$\\\hline
$C_{4}$ & $12.9\%$ & $0\%$ & $15.3\%$ & $0\%$ & $15.7\%$ & $0\%$ & $14.9\%$ &
$3.89\%$ & $2.99\%$ & $13.3\%$ & $3.94\%$ & $17.0\%$\\\hline
$C_{5}$ & $0\%$ & $0\%$ & $0\%$ & $10.8\%$ & $0\%$ & $16.6\%$ & $39.0\%$ &
$0\%$ & $0\%$ & $0\%$ & $0\%$ & $33.6\%$\\\hline
$C_{6}$ & $5.33\%$ & $0\%$ & $0\%$ & $0\%$ & $8.73\%$ & $0\%$ & $24.1\%$ &
$28.4\%$ & $15.7\%$ & $17.8\%$ & $0\%$ & $0\%$\\\hline
$C_{7}$ & $0\%$ & $0\%$ & $0\%$ & $0\%$ & $0\%$ & $0\%$ & $73.8\%$ & $0.0\%$ &
$0.0\%$ & $26.3\%$ & $0\%$ & $0\%$\\\hline
$C_{8}$ & $0\%$ & $0\%$ & $4.49\%$ & $0\%$ & $12.9\%$ & $0\%$ & $59.0\%$ &
$0\%$ & $0\%$ & $0\%$ & $0\%$ & $23.6\%$\\\hline
$C_{9}$ & $0\%$ & $0\%$ & $0\%$ & $0\%$ & $0\%$ & $0\%$ & $75.0\%$ & $0.0\%$ &
$0.0\%$ & $25.0\%$ & $0\%$ & $0\%$\\\hline
$C_{10}$ & $0\%$ & $0\%$ & $0\%$ & $0\%$ & $0\%$ & $0\%$ & $100\%$ & $0\%$ &
$0\%$ & $0\%$ & $0\%$ & $0\%$\\\hline
$C_{12}$ & $0\%$ & $0\%$ & $0\%$ & $0\%$ & $0\%$ & $0\%$ & $41.2\%$ & $23.5\%$
& $0.0\%$ & $0.0\%$ & $0\%$ & $0\%$\\\hline
$C_{2}\times C_{2}$ & $17.8\%$ & $0\%$ & $13.5\%$ & $0\%$ & $13.6\%$ & $0\%$ &
$8.52\%$ & $7.91\%$ & $11.3\%$ & $10.6\%$ & $7.89\%$ & $8.89\%$\\\hline
$C_{2}\times C_{4}$ & $0\%$ & $0\%$ & $17.8\%$ & $0\%$ & $18.6\%$ & $0\%$ &
$29.6\%$ & $0\%$ & $0\%$ & $0\%$ & $0\%$ & $34.0\%$\\\hline
$C_{2}\times C_{6}$ & $0\%$ & $0\%$ & $0\%$ & $0\%$ & $0\%$ & $0\%$ & $25.0\%$
& $32.3\%$ & $17.7\%$ & $25.0\%$ & $0\%$ & $0\%$\\\hline
$C_{2}\times C_{8}$ & $0\%$ & $0\%$ & $0\%$ & $0\%$ & $0\%$ & $0\%$ & $100\%$
& $0\%$ & $0\%$ & $0\%$ & $0\%$ & $0\%$\\\hline \label{dist}
\end{tabular}
\end{center}
\endgroup}

\noindent \textbf{Acknowledgments.} The author would like to thank Alyson Deines, Enrique Gonz\'{a}lez-Jim\'{e}nez, Daniel Ortega, and Manami Roy for helpful conversation as the article was being written.  In particular, their python suggestions helped simplify the code verifying Theorem~\ref{thm1}.

\bibliographystyle{amsplain}
\bibliography{bibliography}

\providecommand{\bysame}{\leavevmode\hbox to3em{\hrulefill}\thinspace}
\providecommand{\MR}{\relax\ifhmode\unskip\space\fi MR }
\providecommand{\MRhref}[2]{%
  \href{http://www.ams.org/mathscinet-getitem?mr=#1}{#2}
}
\providecommand{\href}[2]{#2}
\begin{thebibliography}{10}

\bibitem{2020arXiv200101016B}
Alexander~J. Barrios, \emph{Minimal models of rational elliptic curves with
  non-trivial torsion}, Res. Number Theory \textbf{8} (2022), no.~1, Paper No.
  4, 39 pp. \MR{4346532}

\bibitem{GitHubRRM}
\bysame, \emph{Code for reduced minimal models and torsion},
  \url{https://github.com/alexanderbarrios/reduced_minimal_models}, 2023.

\bibitem{BarRoy}
Alexander~J. Barrios and Manami Roy, \emph{Local data of rational elliptic
  curves with nontrivial torsion}, Pacific J. Math. \textbf{318} (2022), no.~1,
  1--42. \MR{4460225}

\bibitem{connell}
Ian Connell, \emph{\href{https://webs.ucm.es/BUCM/mat/doc8354.pdf}{Elliptic
  Curve Handbook}}, 1999, McGill University.

\bibitem{MR1628193}
J.~E. Cremona, \emph{Algorithms for modular elliptic curves}, second ed.,
  Cambridge University Press, Cambridge, 1997. \MR{1628193}

\bibitem{cremonadata}
John Cremona, \emph{Johncremona/ecdata: 2022-10-13},  (2022),
  \url{http://dx.doi.org/10.5281/zenodo.161341}.

\bibitem{MR1024419}
Alain Kraus, \emph{Quelques remarques \`a propos des invariants {$c_4,\;c_6$}
  et {$\Delta$} d'une courbe elliptique}, Acta Arith. \textbf{54} (1989),
  no.~1, 75--80. \MR{1024419}

\bibitem{MR0434947}
Daniel~Sion Kubert, \emph{Universal bounds on the torsion of elliptic curves},
  Proc. London Math. Soc. (3) \textbf{33} (1976), no.~2, 193--237. \MR{0434947}

\bibitem{MR637305}
Michael Laska, \emph{An algorithm for finding a minimal {W}eierstrass equation
  for an elliptic curve}, Math. Comp. \textbf{38} (1982), no.~157, 257--260.
  \MR{637305}

\bibitem{lmfdb}
The {LMFDB Collaboration}, \emph{The {L}-functions and modular forms database},
  \url{http://www.lmfdb.org}, 2023, [Online; accessed 2 January 2023].

\bibitem{MR488287}
B.~Mazur, \emph{Modular curves and the {E}isenstein ideal}, Inst. Hautes
  \'Etudes Sci. Publ. Math. (1977), no.~47, 33--186 (1978). \MR{488287}

\bibitem{MR1225948}
Ioannis Papadopoulos, \emph{Sur la classification de {N}\'{e}ron des courbes
  elliptiques en caract\'{e}ristique r\'{e}siduelle {$2$} et {$3$}}, J. Number
  Theory \textbf{44} (1993), no.~2, 119--152. \MR{1225948}

\bibitem{MR2514094}
Joseph~H. Silverman, \emph{The arithmetic of elliptic curves}, second ed.,
  Graduate Texts in Mathematics, vol. 106, Springer, Dordrecht, 2009.
  \MR{2514094}

\bibitem{MR2041090}
William~A. Stein and Mark Watkins, \emph{A database of elliptic curves---first
  report}, Algorithmic number theory ({S}ydney, 2002), Lecture Notes in Comput.
  Sci., vol. 2369, Springer, Berlin, 2002, pp.~267--275. \MR{2041090}

\bibitem{sagemath}
W.\thinspace{}A. Stein et~al., \emph{{S}age {M}athematics {S}oftware ({V}ersion
  9.7)}, The Sage Development Team, 2023, {\tt http://www.sagemath.org}.

\bibitem{Tate1975}
J.~Tate, \emph{Algorithm for determining the type of a singular fiber in an
  elliptic pencil}, Modular functions of one variable, {IV} ({P}roc.
  {I}nternat. {S}ummer {S}chool, {U}niv. {A}ntwerp, {A}ntwerp, 1972), 1975,
  pp.~33--52. Lecture Notes in Math., Vol. 476. \MR{0393039}

\end{thebibliography}
\end{document}